\numberwithin{equation}{section}
\newtheorem{theorem}{Theorem}[section] 
\newtheorem{definition}[theorem]{Definition}
\newtheorem{lemma}[theorem]{Lemma}
\newtheorem{remark}[theorem]{Remark}
\newcommand{\tu}{\textup}
\newcommand{\bfa}[1]{\boldsymbol{#1}} 			%
\newcommand{\ddiv}{\text{div}}     				%
\definecolor{black}{rgb}{0,0,0}
\definecolor{red}{rgb}{1,0,0}
\definecolor{blue}{rgb}{0,0,1}
\author[F\MakeLowercase{u}, C\MakeLowercase{hung}, M\MakeLowercase{ai}]
{\Large \Large S\MakeLowercase{hubin} F\MakeLowercase{u}, E\MakeLowercase{ric} C\MakeLowercase{hung}, \Large T\MakeLowercase{ina} M\MakeLowercase{ai}} 
\date{\today}
\title[G\MakeLowercase{eneralized multiscale 
finite element method for a nonlinear elasticity model}]
{\textsf{\LARGE G\MakeLowercase{eneralized 
multiscale finite element method for a strain-limiting nonlinear elasticity model}}}
\begin{document}

%%%%%%%%%%%%%%%
\begin{abstract}

In this paper, we consider multiscale methods for nonlinear elasticity.  
In particular, we investigate the Generalized 
Multiscale Finite Element Method (GMsFEM) 
for a strain-limiting elasticity problem.  
Being a special case of the naturally implicit constitutive theory 
of nonlinear elasticity, strain-limiting relation has presented an interesting 
class of material bodies, 
for which strains remain bounded (even infinitesimal) while stresses can become 
arbitrarily large.  
The nonlinearity and material heterogeneities can create multiscale features in the solution,
and multiscale methods are therefore necessary. 
To handle the resulting nonlinear monotone quasilinear elliptic equation, we use linearization based on the Picard iteration.  We consider two types of basis functions, offline and online basis functions, following the general framework of GMsFEM.
The offline basis functions depend nonlinearly on the solution.  Thus, we design an indicator function
and we will recompute the offline basis functions when the indicator function predicts that the material property has significant change during the iterations.
On the other hand, we will use the residual based online basis functions to reduce the error substantially
when updating basis functions is necessary. 
Our numerical results show that the above combination of offline and online basis functions is able to
give accurate solutions with only a few basis functions per each coarse region and updating basis functions in selected iterations.

\end{abstract}
%%%%%%%%%%

\maketitle
%%%%%%%%%%%%%%%%

\noindent \textbf{Keywords.}  Generalized multiscale finite element method; Strain-limiting nonlinear elasticity; Adaptivity; Residual based online multiscale basis functions

\vskip10pt

\noindent \textbf{Mathematics Subject Classification.} 65N30, 65N99
%35Q74 PDEs in connection with mechanics of deformable solids
%35J60 Nonlinear elliptic equations
%65N12 Stability and convergence of numerical methods
%65N15 Error bounds
%65N30 Finite elements
%65N99 PDE, BVP, none of above
%74B20 Nonlinear elasticity
%74Q05 Homogenization in equilibrium problems

\vfill

\noindent Shubin Fu $\cdot$ Eric Chung

\noindent Department of Mathematics, The Chinese University of Hong Kong, Shatin, Hong Kong

\noindent E-mail: shubinfu89@gmail.com (Shubin Fu); tschung@math.cuhk.edu.hk (Eric Chung) \\

\noindent Tina Mai

\noindent Institute of Research and Development, 
Duy Tan University, Da Nang 550000, Vietnam

\noindent E-mail, corresponding author: maitina@duytan.edu.vn; tinagdi@gmail.com

%%%%%

\newpage

%\tableofcontents

%%%
\section{Introduction}
Even though multiscale methods for linear equations are positively growing up, 
their applications to nontrivial nonlinear problems are still hard.  
In addition, many nonlinear elastic materials contain multiple scales and 
high contrast.  To overcome the challenge from nonlinearity, the 
main idea is to linearize it in Picard iteration, as employed in \cite{G2} and references therein.  To deal with the 
difficulties from multiple scales and high contrast, 
we apply the GMsFEM (\cite{G1})
%\cite{gle} GMsFEM for linear elasticity
for the linear equation at the current iteration.  

The motivation for our nonlinear elasticity problem is a remarkable trend of studying nonlinear responses of materials based on 
the recently developed \textit{implicit constitutive 
theory} (see \cite{Raji03,KRR-ZAMP2007,KRR-MMS2011a,KRR-MMS2011b}).  
As Rajagopal notes, the theory offers a framework for 
establishing nonlinear, 
infinitesimal strain theories for elastic-like (non-dissipative) material 
behavior.  This setting is different from classical
Cauchy and Green approaches for modeling elasticity which, under the condition of 
infinitesimal strains, lead to  traditional linear models.  Moreover, it is 
significant that the implicit constitutive theory provides a firm theoretical 
foundation for modeling fluid and solid mechanics variously, in engineering, physics, and 
chemistry. 

We consider herein a special sub-class of the implicit constitutive theory 
in solids, namely, 
the
\textit{strain-limiting theory}, for which the linearized strain remains bounded, 
even when the stress is very large.  (In the traditional linear model, the stress 
blows up when the 
strain blows up and vice versa.)  Therefore, the strain-limiting theory is helpful for describing the behavior of fracture, 
brittle materials 
near crack tips or notches (e.g.\ crystals), or intensive loads inside the material body or on its boundary.  
Either case leads to stress concentration, regardless of the small gradient of the 
displacement (and thus infinitesimal strain).  Our considering materials are science-non-fiction and physically meaningful.  
These materials can sustain infinite stresses, and 
do not break (because of the boundedness of strains).

%While not much is known about homogenization of %nonlinear partial f
%differential equations (\cite{tartarhomo}), the case %of monotone operators is a exception, 
%which is our case.  In addition, 
  
To solve the multiple scales, 
rather than direct numerical simulations 
on fine scales, 
model reduction methods are proposed in literature to 
reduce the computational expensiveness.  Generally, model reduction techniques 
include upscaling and multiscale approaches.  On coarse grid (which is much 
larger than fine grid), 
upscaling methods involve 
upscaling the media properties based on homogenization to capture macroscopic 
behavior, whereas 
multiscale methods additionally need precomputed multiscale 
basis functions.

Within the framework of multiscale methods, the multiscale finite element method 
(MsFEM), as in \cite{Ms, Msnon}, has effectively proved notable success in a variety of practical 
applications, but it requires scale separation nevertheless.  To overcome this 
requirement, we use the generalized multiscale finite element method (GMsFEM), as in \cite{G1}, 
to systematically construct multiple multiscale basis.  More specifically, in 
the GMsFEM, the computation is divided into two stages: offline and online.  
One constructs, in the offline stage, a small dimension space, which can be used effectively 
in the online stage to construct multiscale basis functions, to solve the problem on coarse grid.  The construction of offline and online spaces is based on the selection of local spectral problems as well as the selection of the snapshot space.  In \cite{gle}, the GMsFEM was applied to handle the linear elasticity problem.  In \cite{yalchin18a}, a model reduction method was introduced to solve nonlinear monotone elliptic equations.  In \cite{gnone}, thanks to \cite{G2} (for handling nonlinearities), the GMsFEM was used to solve nonlinear poroelasticity problems.  

Here, our paper will combine the ideas of Picard iteration and the GMsFEM in \cite{chungres} to solve a strain-limiting nonlinear elasticity problem 
   \cite{A-Mai-Walton, B-Mai-Walton}.
%oversampling
At each Picard iteration, we will either apply the offline GMsFEM or the residual based online adaptive GMsFEM.  In the latter approach, we study the
proposed online basis construction in conjunction with adaptivity (\cite{sara, chungres, chungres1}), which means that online basis functions are added in some
selected regions.  Adaptivity is an important step to obtain an effective local multiscale model reduction as
it is crucial to reduce the cost of online multiscale basis computations.  More specifically, adaptive algorithm allows one to add more basis functions in neighborhoods with more complexity without using a priori information.  Given a sufficient number of initial basis functions in the offline space, our numerical results show that the adaptive
addition of online basis functions substantially decreases the error, accelerates the convergence, and reduces computational cost of the GMsFEM.     

Our strategy is that after some Picard iterations, when the relative change of the permeability coefficient is larger than a given fixed tolerance, we need to update (either offline or online, context-dependently) basis functions.  This updating procedure ends when we obtain desired error, and these new basis functions is kept the same in the next Picard iterations until we need to compute them again.

The next Section contains the mathematical background of our considering 
strain-limiting nonlinear elasticity problem.  Section \ref{pre} is for some 
preliminaries about the GMsFEM, including fine-scale discretization and 
Picard iteration for linearization.  Section \ref{sec4} is devoted to the general idea of GMsFEM, including some existing results regarding offline GMsFEM, for the current nonlinear elasticity problem.  
%In Section \ref{sec5}, we present some existing results for offline adaptive GMsFEM.  
Section \ref{sec5} is about the existing method of residual based online adaptive GMsFEM for computing online multiscale basis functions, in our context.  In Section \ref{sec6}, some numerical examples will be shown.  The last Section \ref{sec7} is for conclusions.

%%%%%%%
\section{Formulation of the problem}\label{formulate}
\subsection{Input problem and classical formulation}

Let us consider, in dimension two, 
a nonlinear elastic composite material 
$\Omega = \Omega^1 \times \Omega^2 \in \mathbb{R} \times \mathbb{R}$.  

We assume that the material is being at a static state after 
the action of body forces $\bfa{f}: \Omega \to \mathbb{R}^2$ and traction forces 
$\bfa{G}: \partial \Omega_T 
\to \mathbb{R}^2$.  The boundary of the set $\Omega$ is denoted by $\partial \Omega$, 
which is Lipschitz continuous,  
consisting of two parts $\partial \Omega_T$ and $\partial \Omega_D$, where the displacement 
$\bfa{u}: \Omega \to \mathbb{R}^2$ is given on $\partial \Omega_D$.  
We are considering the strain-limiting 
model of the form (as in \cite{A-Mai-Walton})
\begin{equation}\label{et}
 \bfa{E} = \frac{\bfa{T}}{1 + \beta(\bfa{x})|\bfa{T}|}\,.
\end{equation}
Equivalently,
\begin{equation}\label{te}
 \bfa{T} = \frac{\bfa{E}}{1 - \beta(\bfa{x})|\bfa{E}|}\,.
\end{equation}
In the equations (\ref{et}) and (\ref{te}), $\bfa{T}$ denotes the Cauchy stress 
$\bfa{T}: \Omega \to \mathbb{R}^{2 \times 2}$; whereas, $\bfa{E}$
denotes the classical \textit{linearized} strain tensor,
\begin{equation}\label{e}
\bfa{E} := \frac{1}{2}(\nabla \bfa{u} + \nabla \bfa{u}^{\text{T}})\,.
\end{equation}
We can write 
\[\bfa{E} = \bfa{D}(\bfa{u}) = \bfa{Du} =\nabla_s \bfa{u}\,.\]
Then, by (\ref{te}), it follows that
\begin{equation}\label{tea}
 \bfa{T} = \frac{\bfa{D}(\bfa{u})}{1 - \beta(\bfa{x})|\bfa{D}(\bfa{u})|}\,.
\end{equation}

The strain-limiting parameter function is denoted by $\beta(\bfa{x})$, which 
depends on the position variable $\bfa{x} = (x^1,x^2)$.   
We notice from (\ref{et}) that 
\begin{equation}\label{ebound}
 |\bfa{E}| = \frac{|\bfa{T}|}{1 + \beta(\bfa{x})|\bfa{T}|} < \frac{1}{\beta(\bfa{x})}\,.
\end{equation}
This means that $\displaystyle \frac{1}{\beta(\bfa{x})}$ is the upper-bound on $|\bfa{E}|$, and 
taking sufficiently large $\beta(\bfa{x})$ raises 
the limiting-strain small upper-bound, as desired.  
However, we avoid $\beta(\bfa{x}) \to \infty$.  (If $\beta(\bfa{x}) \to \infty$, then $|\bfa{E}| 
< \displaystyle \frac{1}{\beta(\bfa{x})} 
\to 0$, a contradiction.)  
For the analysis of the problem, $\beta(\bfa{x})$ is assumed to be smooth and have compact range 
$0 < m \leq \beta(\bfa{x}) \leq M$.  Here, $\beta(\bfa{x})$ 
is chosen so that the strong ellipticity condition holds (see \cite{A-Mai-Walton}), that is, $\beta(\bfa{x})$ is large enough, 
to prevent bifurcations arising in numerical simulations.  

%%%%%
\subsection{Function space}
The preliminaries are the same as in \cite{C-G-K}.  Latin indices vary in the set 
$\{1,2\}$.  The space of functions, vector fields in $\mathbb{R}^2$, and $2 \times 2$ matrix fields defined over $\Omega$ are respectively denoted by italic capitals (e.g.\ $L^2(\Omega)$), 
boldface Roman capitals (e.g.\ $\bfa{V}$), 
and special Roman capitals (e.g.\ $\mathbb{S}$).  
The space of symmetric matrices of order 2 is denoted by $\mathbb{S}^2$.  The subscript $s$ 
appended to a special Roman capital denotes a space of symmetric matrix fields.

Our considering space is $\bfa{V}: = \bfa{H}_0^1(\Omega) = \bfa{W}_0^{1,2}(\Omega)$.  
However, the techniques here can be used 
in more general space $\bfa{W}_0^{1,p}(\Omega)$, where $2\leq p < \infty$.  
The reason we consider the space 
$\bfa{W}_0^{1,2}(\Omega)$ is that we can characterize displacements that 
vanish on the boundary $\partial \Omega$ of $\Omega$.
%The Sobolev norm 
%$\| \cdot \|_{W^{1,2}(\Omega)}$ is of the form
%\[\|v\|_{W^{1,2}(\Omega)}: = (\|v\|^2_{L^2(\Omega)} + \|\nabla v\|^2_{L^2(\Omega)})
%^{\frac{1}{2}}\,.\]
The dual space 
(also called the adjoint space), which consists of continuous linear functionals on $\bfa{H}_0^1(\Omega)$, is denoted by 
$\bfa{H}^{-1}(\Omega)$, and 
the value of a functional $\bfa{b} \in \bfa{H}^{-1}(\Omega)$ at a point 
$\bfa{v} \in \bfa{H}_0^1(\Omega)$ is 
denoted by $\langle \bfa{b},\bfa{v} \rangle$.  

The Sobolev norm $\| \cdot \|_{\bfa{W}_0^{1,2}(\Omega)}$ is of the form 
\[\|\bfa{v}\|_{\bfa{W}_0^{1,2}(\Omega)} = (\|\bfa{v}\|^2_{\bfa{L}^2(\Omega)} + 
\|\nabla \bfa{v}\|^2_{\mathbb{L}^2(\Omega)})^{\frac{1}{2}}\,.\]
Here, $\| \bfa{v}\|_{\bfa{L}^2(\Omega)}:= \| | \bfa{v}| \|_{\bfa{L}^2(\Omega)}\,,$ where 
$| \bfa{v}|$ denotes the Euclidean norm of the 2-component vector-valued function 
$ \bfa{v}$; and 
$\| \nabla \bfa{v}\|_{\mathbb{L}^2(\Omega)}:= \| | \nabla \bfa{v}| \|_{\mathbb{L}^2(\Omega)}\,,$ where 
$| \nabla \bfa{v}|$ denotes the Frobenius norm of the $2 \times 2$ matrix $\nabla \bfa{v}$.  
We recall that the Frobenius norm on $\mathbb{L}^2(\Omega)$ is defined by 
$| \bfa{X} |^2 : = \bfa{X} \cdot \bfa{X} = \text{tr}(\bfa{X}^{\text{T}} \bfa{X})\,.$
%The dual norm to $\| \cdot \|_{W_0^{1,2}(\Omega)}$ is $\| \cdot \|_{W^{-1,2}(\Omega)}$, i.e.,
%\[\| f \|_{W^{-1,2}(\Omega)} = \sup_{v \in W_0^{1,2}(\Omega)}
%\frac{|\langle f,v \rangle|}{\|v\|_{W_0^{1,2}(\Omega)}}\,.\]

The dual norm to $\| \cdot \|_{\bfa{H}_0^1(\Omega)}$ is $\| \cdot \|_{\bfa{H}^{-1}(\Omega)}$, i.e.,
\[\| \bfa{b} \|_{\bfa{H}^{-1}(\Omega)} = \sup_{\bfa{v} \in \bfa{H}_0^1(\Omega)}
\frac{|\langle \bfa{b},\bfa{v} \rangle|}{\|\bfa{v}\|_{\bfa{H}_0^1(\Omega)}}\,.\]

Let $\Omega$ be a bounded, simply connected, open, Lipschitz, convex domain of $\mathbb{R}^2$.  Let 
%and let $\bfa{u} \in \bfa{V}$ (thus $\bfa{E} \in \mathbb{L}^2(\Omega)$),
\begin{equation}\label{H1*}
 \bfa{f} \in \bfa{H}^{1}_*(\Omega)= 
\left \{\bfa{g} \in \bfa{H}^1(\Omega) \biggr | \int_{\Omega} \bfa{g} \, dx = \bfa{0}\right \} 
\subset \bfa{L}^2(\Omega) \subsetneq \bfa{H}^{-1}(\Omega)\,.
\end{equation}  
%H_0^1 \subset H^1 \subset L^2 \subset H_0^1 (the last is strict)
be bounded in $\bfa{L}^2(\Omega)$.
We consider the following problem:  
Find $\bfa{u} \in \bfa{H}^1(\Omega)$ and $\bfa{T} \in \mathbb{L}^1(\Omega)$ such that
\begin{align}\label{form1a}
\begin{split}
 -\textup{\ddiv} (\bfa{T}) &= \bfa{f}  \quad \text{in } \Omega \,,\\
  \bfa{Du} &= \frac{\bfa{T}}{1 + \beta(\bfa{x})|\bfa{T}|} \quad \text{in } 
 \Omega \,,\\
 \bfa{u} &= \bfa{0} \quad \text{on } \partial \Omega_D\,,\\
 %only homogeneous BC is considered here
 \bfa{Tn} &= \bfa{G} \quad \text{on } \partial \Omega_T\,,
 %later, we assume $\partial \Omega_T = \emptyset
 \end{split}
 \end{align}
where $\bfa{n}$ stands for the outer unit normal vector to the boundary of $\Omega$. 

%In (\ref{form1a}), the combination of the first equation and the second one 
%is equivalent to
%\begin{align}\label{form2}
% - \ddiv\left(\frac{\bfa{D}(\bfa{u})}{1 - \beta(x^1)|\bfa{D}(\bfa{u})|}\right) = \bfa{f} \quad \text{in } 
% \Omega\,.
%\end{align}

%Assume that $\partial \Omega_T = \emptyset$.  Using (\ref{tea}), 
%we formulate the considering problem as follows: 
%Find $\bfa{u} \in \bfa{V}$ and $\bfa{T} \in \mathbb{L}^2(\Omega)$ such that
%\begin{align}\label{form1}
%\begin{split}
% -\ddiv (\bfa{T}) &= \bfa{f}  \quad \text{in } \Omega \,,\\
%  \bfa{T} &= \frac{\bfa{D}(\bfa{u})}{1 - \beta(x^1)|\bfa{D}(\bfa{u})|} \quad \text{in } 
% \Omega \,,\\
% \bfa{u} &= \bfa{0} \quad \text{on } \partial \Omega\,.
% \end{split}
% \end{align}

Benefiting from the notations in \cite{B-M-S}, we will write
\[\bfa{S} \text{ as } \bfa{T} \qquad \text{and} \qquad \bfa{D}(\bfa{u})= \bfa{Du} 
\text{ as } \bfa{E} = \bfa{E}(\bfa{u})\,.\] 

The considering model (\ref{et}) is compatible with the laws of thermodynamics 
\cite{KRR-ARS-PRSA2007,Rajagopal493}, which means that the class of materials are non-dissipative 
and are elastic. 

We assume $\partial \Omega_T = \emptyset$.  Using (\ref{form1a}), 
   we write the considering formulation in the form of displacement problem:  
   Find $\bfa{u} \in \bfa{H}_0^1(\Omega)$ such that
   \begin{align}
 -\textup{\ddiv} \left(\frac{\bfa{D}(\bfa{u})}{1 - \beta(\bfa{x})|\bfa{D}(\bfa{u})|}\right) 
 &= \bfa{f} \quad \text{in } 
 \Omega \,,\label{form3} \\ 
 \bfa{u}&= \bfa{0}
 \quad \text{on } \partial \Omega\,.\label{D}
 %\bfa{S}_{\epsilon} = G, \quad (S_{\epsilon})_{(1)} &= (S_{\epsilon})_{(2)} 
 %\quad \text{on } \partial \Omega_S\,,\label{T}
 \end{align}
 We denote 
 \begin{equation}\label{form4}
  \kappa(\bfa{x}, |\bfa{D}(\bfa{u})|)=\frac{1}{1 - 
  \beta(\bfa{x})|\bfa{D}(\bfa{u})|}\,, \quad
  \bfa{a}(\bfa{x},\bfa{D}(\bfa{u})) = 
  \kappa(\bfa{x}, |\bfa{D}(\bfa{u})|)\bfa{D}(\bfa{u})\,,
 \end{equation}
 in which $\bfa{u}(\bfa{x}) \in 
   \bfa{W}_0^{1,2}(\Omega)$.  In this setting, 
   $\bfa{a(\bfa{x},\bfa{\xi})} \in 
   \mathbb{L}^1(\Omega)$, $\bfa{\xi} \in \mathbb{L}^{\infty}(\Omega)$, as 
   in \cite{Beck2017}.
   
%%%%%
\subsection{Existence and uniqueness}
Notice from (\ref{ebound}) that $\displaystyle \frac{1}{\beta(\bfa{x})}$ 
is the upper-bound on $|\bfa{D}(\bfa{u})|$, and 
$0 < m \leq \beta(\bfa{x}) \leq M$ 
such that 
\begin{equation}\label{ebound2}
 0 \leq |\bfa{D}(\bfa{u})| < \frac{1}{M}\leq
 \displaystyle \frac{1}{\beta(\bfa{x})} \leq \frac{1}{m} \,.
\end{equation}
%\begin{equation}\label{deno1}
% 0 < 1 - M |\bfa{D}(\bfa{u}_{\e})| 
% \leq 1 - \beta_{\e}(x^1) |\bfa{D}(\bfa{u}_{\e})| 
% \leq 1 - m |\bfa{D}(\bfa{u}_{\e})| \leq 1\,.
%\end{equation}

We define 
\begin{equation}\label{Fz}
\bfa{F}(\bfa{\xi})= \frac{\bfa{\xi}}{1-\beta(\bfa{x})|\bfa{\xi}|}\,,
\end{equation}
where $\bfa{\xi} \in \mathbb{L}^{\infty}(\Omega)$, and
\begin{equation}\label{zbound}
 0 \leq |\bfa{\xi}| < \frac{1}{M} \leq \frac{1}{\beta(\bfa{x})}\,.
\end{equation}  

With these facts and thanks to \cite{B-M-S}, 
we derive the following results, which were also stated in 
\cite{BMRS14} (p.\ 19).

%%%%% 
\begin{lemma}\label{lem1}
Let 
\begin{equation}\label{calZ}
 \mathcal{Z}:= \left \{ \bfa{\zeta} \in \mathbb{R}^{2 \times 2} \; \biggr | \; 
 0 \leq |\bfa{\zeta}| <\dfrac{1}{M}  \right \}\,.
\end{equation}
 For any $\bfa{\xi}\in \mathcal{Z}$ such that $0 \leq |\bfa{\xi}|  
< \dfrac{1}{M}$, consider the mapping 
 \[\bfa{\xi} \in \mathcal{Z} \mapsto 
 \bfa{F}(\bfa{\xi}): = \frac{\bfa{\xi}}{1-\beta(\bfa{x}) |\bfa{\xi}|} 
 \in \mathbb{R}^{2 \times 2}\,.\]
 Then, for each $\bfa{\xi}_1, \bfa{\xi}_2 \in \mathcal{Z}$, we have
 \begin{align}
  |\bfa{F}(\bfa{\xi}_1) - \bfa{F}(\bfa{\xi}_2)| 
  &\leq \frac{|\bfa{\xi}_1-\bfa{\xi}_2|} 
  {(1- \beta(\bfa{x})(|\bfa{\xi}_1| + |\bfa{\xi}_2|))^2}\,, \label{cont1} \\
  %if |\bfa{\xi}_2| \leq |\bfa{\xi}_1| then the denominator does not have |\bfa{\xi}_2|.
  (\bfa{F}(\bfa{\xi}_1) - \bfa{F}(\bfa{\xi}_2)) \cdot (\bfa{\xi}_1-\bfa{\xi}_2) 
  &\geq |\bfa{\xi}_1-\bfa{\xi}_2|^2\,. \label{mono1}
 \end{align}
\end{lemma}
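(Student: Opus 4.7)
The plan is to exploit the smoothness of $\bfa{F}$ on the open convex set $\mathcal{Z}$ and integrate its derivative along the straight segment $\bfa{\xi}(s) = (1-s)\bfa{\xi}_2 + s\bfa{\xi}_1$ for $s \in [0,1]$; the segment stays in $\mathcal{Z}$ by convexity, so the integrand remains finite throughout. The key step is a direct computation of the Jacobian. Writing $|\bfa{\xi}|$ for the Frobenius norm and viewing $\bfa{F}$ componentwise as $F_{ij}(\bfa{\xi}) = \xi_{ij}/(1-\beta(\bfa{x})|\bfa{\xi}|)$, differentiation yields
\[
\frac{\partial F_{ij}}{\partial \xi_{kl}}(\bfa{\xi}) = \frac{\delta_{ik}\delta_{jl}}{1-\beta(\bfa{x})|\bfa{\xi}|} + \frac{\beta(\bfa{x})\,\xi_{ij}\xi_{kl}}{|\bfa{\xi}|\,(1-\beta(\bfa{x})|\bfa{\xi}|)^2}\,,
\]
so that for every matrix direction $\bfa{H} \in \mathbb{R}^{2\times 2}$,
\[
\bfa{H} \cdot (D\bfa{F}(\bfa{\xi})\bfa{H}) = \frac{|\bfa{H}|^2}{1-\beta(\bfa{x})|\bfa{\xi}|} + \frac{\beta(\bfa{x})\,(\bfa{\xi} \cdot \bfa{H})^2}{|\bfa{\xi}|\,(1-\beta(\bfa{x})|\bfa{\xi}|)^2}\,,
\]
and, using Cauchy--Schwarz $|\bfa{\xi} \cdot \bfa{H}| \leq |\bfa{\xi}||\bfa{H}|$ and collecting over a common denominator,
\[
|D\bfa{F}(\bfa{\xi})\bfa{H}| \leq \frac{|\bfa{H}|}{(1-\beta(\bfa{x})|\bfa{\xi}|)^2}\,.
\]

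With this Jacobian in hand the two inequalities fall out of the fundamental theorem of calculus $\bfa{F}(\bfa{\xi}_1) - \bfa{F}(\bfa{\xi}_2) = \int_0^1 D\bfa{F}(\bfa{\xi}(s))(\bfa{\xi}_1 - \bfa{\xi}_2)\,ds$, applied with $\bfa{H} = \bfa{\xi}_1 - \bfa{\xi}_2$. For the monotonicity bound (\ref{mono1}), both summands in the quadratic form are nonnegative and, because $1-\beta(\bfa{x})|\bfa{\xi}(s)| \in (0,1]$, the first summand alone is bounded below by $|\bfa{\xi}_1-\bfa{\xi}_2|^2$; integrating in $s$ yields the claim. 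For the Lipschitz-type bound (\ref{cont1}), I would combine the operator-norm estimate with the convexity inequality $|\bfa{\xi}(s)| \leq (1-s)|\bfa{\xi}_2| + s|\bfa{\xi}_1| \leq |\bfa{\xi}_1|+|\bfa{\xi}_2|$, so that $(1-\beta(\bfa{x})|\bfa{\xi}(s)|)^{-2} \leq (1-\beta(\bfa{x})(|\bfa{\xi}_1|+|\bfa{\xi}_2|))^{-2}$ in the regime where the latter denominator is positive, and then integrate in $s$.

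The main point requiring care is the apparent singularity of the Jacobian formula at $\bfa{\xi}=\bfa{0}$, which I would handle by noting that the second summand is $O(|\bfa{\xi}|)$ near the origin and so extends continuously by zero; elsewhere the expression is classical. A secondary issue is that (\ref{cont1}) is only quantitatively useful when $\beta(\bfa{x})(|\bfa{\xi}_1|+|\bfa{\xi}_2|) < 1$, which is the relevant regime for the subsequent strain-limiting analysis; in general one actually obtains the sharper intermediate bound $|\bfa{F}(\bfa{\xi}_1)-\bfa{F}(\bfa{\xi}_2)| \leq |\bfa{\xi}_1-\bfa{\xi}_2|/[(1-\beta(\bfa{x})|\bfa{\xi}_1|)(1-\beta(\bfa{x})|\bfa{\xi}_2|)]$ from the same calculation, from which the stated bound follows by the elementary algebraic inequality $(1-a)(1-b) \geq (1-a-b)^2$ valid whenever $a,b \geq 0$ and $a+b<1$. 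Apart from the Jacobian computation, everything reduces to one-variable monotonicity along the segment.
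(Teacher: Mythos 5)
Your proof is correct, and it follows the same basic route as the paper's: both arguments write $\bfa{F}(\bfa{\xi}_1)-\bfa{F}(\bfa{\xi}_2)$ as the integral of the derivative of $\bfa{F}$ along the segment $\bfa{\xi}(s)=(1-s)\bfa{\xi}_2+s\bfa{\xi}_1$, bound $|\bfa{\xi}(s)|$ by convexity of the norm to obtain \eqref{cont1}, and use $1-\beta(\bfa{x})|\bfa{\xi}(s)|\le 1$ to obtain \eqref{mono1}. The genuine difference is in the key computational step. The paper asserts the closed-form identity $\frac{d}{dt}\bfa{F}(\bfa{\eta}(t))=(\bfa{\xi}_1-\bfa{\xi}_2)/(1-\beta(\bfa{x})|\bfa{\eta}(t)|)^2$ with $\bfa{\eta}(t)=t\bfa{\xi}_1+(1-t)\bfa{\xi}_2$; this is not literally true, because differentiating $|\bfa{\eta}(t)|$ produces the extra rank-one term $\beta(\bfa{x})\,(\bfa{\eta}\cdot(\bfa{\xi}_1-\bfa{\xi}_2))\,\bfa{\eta}/\bigl(|\bfa{\eta}|(1-\beta(\bfa{x})|\bfa{\eta}|)^2\bigr)$, which vanishes only when $\bfa{\eta}$ and $\bfa{\xi}_1-\bfa{\xi}_2$ are collinear. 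Your full Jacobian computation keeps this term and shows it is exactly what the argument needs: it is positive semidefinite, so it can only strengthen \eqref{mono1}, and by Cauchy--Schwarz its contribution to the operator norm combines with the leading term to give precisely $(1-\beta(\bfa{x})|\bfa{\xi}|)^{-2}$, so the paper's bounds are recovered honestly. Your two side remarks are also well taken: the singularity at $\bfa{\xi}=\bfa{0}$ is removable as you say, and \eqref{cont1} in the stated form is only meaningful (indeed, only valid) when $\beta(\bfa{x})(|\bfa{\xi}_1|+|\bfa{\xi}_2|)<1$, which is not guaranteed on all of $\mathcal{Z}$ since one only knows $|\bfa{\xi}_1|+|\bfa{\xi}_2|<2/M$; your sharper intermediate bound $|\bfa{\xi}_1-\bfa{\xi}_2|/\bigl((1-\beta(\bfa{x})|\bfa{\xi}_1|)(1-\beta(\bfa{x})|\bfa{\xi}_2|)\bigr)$, which follows from the exact evaluation $\int_0^1\bigl((1-s)b+sa\bigr)^{-2}ds=1/(ab)$, is the cleaner statement and implies \eqref{cont1} in the admissible regime via $(1-a)(1-b)\ge(1-a-b)^2$. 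In short, your proposal is a corrected and slightly stronger version of the paper's own argument rather than a different method.
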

%%%

\begin{proof}
We present here the proof in details, thanks to \cite{B-M-S}.  
Notice first that 
\begin{align*}
\bfa{F}(\bfa{\xi}_1) - \bfa{F}(\bfa{\xi}_2) 
& = \int_0^1 \frac{d}{dt} \bfa{F}(t\bfa{\xi}_1 + (1-t)\bfa{\xi}_2)dt \\
&= \int_0^1 \frac{d}{dt}\left(\frac{t\bfa{\xi}_1 + (1-t)\bfa{\xi}_2}
{1-\beta(\bfa{x}) |t\bfa{\xi}_1 + (1-t)\bfa{\xi}_2|}
\right)dt\\
& =\int_0^1 \frac{\bfa{\xi}_1-\bfa{\xi}_2}
{(1-\beta(\bfa{x}) |t\bfa{\xi}_1 + (1-t)\bfa{\xi}_2|)^2} dt\,.
\end{align*}

 For the proof of (\ref{cont1}), we observe that 
 \begin{align*}
  |t\bfa{\xi}_1+ (1-t)\bfa{\xi}_2| \leq \textup{max}\{|\bfa{\xi}_1|, |\bfa{\xi}_2|\} 
  \leq |\bfa{\xi}_1| + |\bfa{\xi}_2| \,.
  %\leq tD + (1-t)D = D, rhs max\{|\bfa{\xi}_1| , |\bfa{\xi}_2|\}
 \end{align*} 
Therefore,
 \begin{align*}
  |\bfa{F}(\bfa{\xi}_1) - \bfa{F}(\bfa{\xi}_2)| 
  %&\leq \int_0^1 \frac{|\bfa{\xi}_1-\bfa{\xi}_2|}
  %{(1 - \beta (|\bfa{\xi}_1| + |\bfa{\xi}_2|))^2}dt\\
  &\leq \frac{|\bfa{\xi}_1-\bfa{\xi}_2|}
  {(1- \beta(\bfa{x})(|\bfa{\xi}_1| + |\bfa{\xi}_2|))^2} \,.
  %$|\bfa{\xi}_1 -  \bfa{\xi}_2| \leq |\bfa{\xi}_1| + |\bfa{\xi}_2| \leq 2D.$
 \end{align*}
Then (\ref{cont1}) follows.
 
For (\ref{mono1}), we notice that 
$(1- \beta(\bfa{x}) |t\bfa{\xi}_1+(1-t)\bfa{\xi}_2|)^2 \leq 1$.  
Thus,
\begin{align*}
 (\bfa{F}(\bfa{\xi}_1)-\bfa{F}(\bfa{\xi}_2)) \cdot (\bfa{\xi}_1-\bfa{\xi}_2) = 
 \int_0^1 \frac{|\bfa{\xi}_1-\bfa{\xi}_2|^2}
 {(1- \beta(\bfa{x}) |t\bfa{\xi}_1+(1-t)\bfa{\xi}_2|)^2} \, dt 
 \geq |\bfa{\xi}_1-\bfa{\xi}_2|^2\,.
\end{align*}
\end{proof}

\begin{remark}
 The condition (\ref{mono1}) also implies that $\bfa{F}(\bfa{\xi})$ is a monotone operator in 
 $\bfa{\xi}$.
\end{remark}

\begin{remark}\label{caZ}
For the rest of the paper, without confusion, we will use the condition 
$\bfa{\xi} \in \mathbb{L}^{\infty}(\Omega)$ with the meaning that 
$\bfa{\xi} \in \mathcal{Z}' = \left \{ \bfa{\zeta} \in \mathbb{L}^{\infty}(\Omega) \; \biggr | \; 
 0 \leq |\bfa{\zeta}| <\dfrac{1}{M}  \right \}$.
\end{remark}

%Then,
%\begin{equation}\label{dFbound}
%\bfa{0} < D_{\bfa{\xi}}F(\bfa{\xi}) = \frac{1}{(1-\beta(x^1)|\bfa{\xi}|)^2} \leq \left(\frac{1}{1-DM}\right)^2 =C\,.
%\end{equation}
%Recall strong ellipticity formula p.16 paper, give exactly this result.
%Hence, $F(\bfa{\xi})$ is an increasing function, and thus a monotone operator in $\bfa{\xi}$.

%%%%
\subsubsection{Weak formulation}  
Let 
\begin{equation}\label{calU}
 \mathcal{U} = \{ \bfa{w} \in \bfa{H}^1(\Omega) \; | 
 \; \bfa{Dw} \in \mathcal{Z}'\}\,,
\end{equation}
with the given $\mathcal{Z}'$ in Remark \ref{caZ}. 
\begin{remark}\label{caU}
For the rest of the paper, without confusion, we will use the condition 
$\bfa{u}, \bfa{v} \in \bfa{H}_0^1(\Omega) \text{ or } \bfa{H}^1(\Omega)$ 
(context-dependently)
with the meaning that 
$\bfa{u}, \bfa{v} \in \mathcal{U}$.
\end{remark}

Now, for $\bfa{u} \in \bfa{V} =\bfa{H}_0^1(\Omega)$, we multiply equation (\ref{form3}) 
by $\bfa{v} \in \bfa{V}$ and integrate the equation 
with respect to $\bfa{x}$ over $\Omega$.  
Integrating the first term by parts and using the condition 
$\bfa{v} = \bfa{0}$ on $\partial \Omega$, we obtain
%\cite Short standard in PDEs existence
\begin{equation}\label{w8.2}
 \int_{\Omega} \bfa{a}  (\bfa{x},\bfa{D}\bfa{u} ) \cdot \bfa{D}\bfa{v} \, dx = 
 \int_{\Omega} \bfa{f} \cdot \bfa{v} \, dx 
 \,, \quad \forall \bfa{v} \in \bfa{V} \,.
\end{equation}
By the weak (often called generalized) formulation of the boundary value problem 
(\ref{form3})-(\ref{D}), we interpret the problem as follows:
\begin{equation}\label{w8.3}
\text{ Find } (\bfa{u}, \bfa{Du})  \in \bfa{V} \times 
\mathbb{L}^{\infty}(\Omega), 
\text{that is, find } 
\bfa{u} \in \bfa{V}
\text{ such that } (\ref{w8.2}) \text{ holds for each } \bfa{v} \in \bfa{V}\,.
\end{equation}
%%%%

\subsubsection{Existence and uniqueness}
In \cite{B-M-S}, the existence and uniqueness of weak solution $(\bfa{u}, \bfa{T})$ 
%(equivalently $(\bfa{Du},\bfa{u})$ or $\bfa{u}$ in our case of (\ref{tea})) 
for (\ref{form1a}) have been proved.  Also, see \cite{iib}, for further reference. 

Similarly, in our paper, we consider the problem:  Find $(\bfa{u}, \bfa{T}) 
\in  \bfa{H}_0^1(\Omega) \times \mathbb{L}^1(\Omega) $ such that
\begin{align}
\int_{\Omega} \bfa{T} \cdot \bfa{D}(\bfa{w}) \, dx &= \int_{\Omega} \bfa{f} \cdot \bfa{w} \, dx 
\quad  \forall \bfa{w} \in \bfa{H}_0^1(\Omega) \,, \label{w4}\\
 \bfa{D}(\bfa{u}) &= \frac{\bfa{T}}{1 + \beta(\bfa{x})|\bfa{T}|}  \quad \text{in } 
 \Omega \,,\label{f4} \\ 
 \bfa{u}&= \bfa{0} \quad \text{on } \partial \Omega\,.\label{D4}
 %\bfa{S}_{\epsilon} = G, \quad (S_{\epsilon})_{(1)} &= (S_{\epsilon})_{(2)} 
 %\quad \text{on } \partial \Omega_S\,,\label{T}
 \end{align}
 This problem is equivalent to problem (\ref{w8.3}).
 %of finding $\bfa{u} \in \bfa{V} = \bfa{W}_0^{1,2}(\Omega)$. 

As noticed in \cite{BMRS14} (Section 4.3), 
the identity (\ref{et}) can be equivalently
rewritten as
\[\bfa{T} = \bfa{a}(\bfa{x}, \bfa{Du}) = \frac{\bfa{Du}}{1 - \beta(\bfa{x}) |\bfa{Du}|}\,,\]
%in which $\bfa{u} \in \bfa{W}_0^{1,2}(\Omega)$, $\bfa{a}(\bfa{Du}) \in \mathbb{L}^2(\Omega)$.
where $\bfa{a}$ is a uniformly monotone operator (\ref{mono1}) 
with at most linear growth at infinity (\ref{cont1}).  Hence, the existence and uniqueness of the solution 
$\bfa{T} \in \mathbb{L}^1(\Omega)$ and $\bfa{u} \in \bfa{H}_0^{1}(\Omega)$ (or, in higher regularity, $\bfa{T} \in \mathbb{L}^2(\Omega)$ and $\bfa{u} \in \bfa{W}_0^{1,2}(\Omega)$) to 
(\ref{w4}) - (\ref{D4}), or $\bfa{u} \in \bfa{W}_0^{1,2}(\Omega)$ to (\ref{w8.2}), 
is guaranteed by \cite{Beck2017}.  
%standard monotone operator theory.

In the case 
with $\beta(\bfa{x})$ in (\ref{form1a}), these results 
are still valid, arriving from
similar argument as in \cite{B-M-S}, thanks to Lemma \ref{lem1}.

%\cite{iii}
%p499, p30 short \cite{shorte}, p68 long

\vspace{10pt}

%In this paper, our approaches are different from the ones in \cite{B-M-S}.  
%%%and \cite{iii};  
%We investigate the MsFEM, GMsFEM and their convergence, 
%and a model reduction method.  
%Lemma \ref{lem1} is helpful for the proof of convergence results for the MsFEM in 
%Section \ref{TMsFEM}.  

%%%%%
\section{Fine-scale discretization and Picard iteration for linearization}
\label{pre}
The solution $\bfa{u} \in \bfa{V}$ to (\ref{form3}) satisfies
\begin{equation}\label{2cem}
 q(\bfa{u},\bfa{v}) = (\bfa{f},\bfa{v}), \quad \forall \bfa{v} \in \bfa{V}\,,
\end{equation}
where
\begin{align}\label{3cem}
 q(\bfa{u}, \bfa{v}) = \int_{\Omega} 
 \bfa{a} (\bfa{x}, \bfa{Du}) \cdot \bfa{Dv} \, dx, 
 \quad (\bfa{f},\bfa{v}) = \int_{\Omega} \bfa{f} \cdot \bfa{v} \, dx\,.
\end{align}

Starting with an initial guess $\bfa{u}^0 = \bfa{0}$, to solve equation (\ref{form3}), we will linearize it by Picard iteration, that is, we solve
\begin{align}
 -\ddiv (\kappa(\bfa{x}, |\bfa{D}( \bfa{u}^n)|) 
 \bfa{D}( \bfa{u}^{n+1})) &= \bfa{f} \quad \text{in } \Omega\,, \label{4cem} \\
 \bfa{u}^{n+1} &= \bfa{0} \quad \text{on } \partial \Omega\,,
\end{align}
where superscripts involving $n\text{ } (\geq 0)$ denote respective iteration levels.  

%To apply the GMsFEM, in (\ref{4cem}), one can consider $|\bfa{D}( \bfa{u}^n)|$ as a constant parameter within each coarse 
%region.  
To discretize (\ref{4cem}), we next introduce the notion of fine and coarse grids.  Let $\mathcal{T}^H$ be a conforming partition of the domain $\Omega$.  We call $H$ the coarse mesh size and $\mathcal{T}^H$ the coarse grid.  Each element of $\mathcal{T}^H$ is called a coarse grid block (patch).  
We denote by $N_v$ the total number of interior vertices of 
$\mathcal{T}^H$ and $N$ the total number of coarse blocks.  Let $\{\bfa{x}_i\}^{N_v}_{i=1}$ be the set of vertices in $\mathcal{T}^H$ and $w_i= \cup \{K_j \in \mathcal{T}^H \; | \; \bfa{x}_i \in \bar{K_j}\}$ be the neighborhood of the node $\bfa{x}_i$.  The conforming refinement of the triangulation $\mathcal{T}^H$ is denoted by $\mathcal{T}_h$, which is called the fine grid, where $h > 0$ is the fine mesh size.  We assume that $h$ is very small so that the fine-scale solution $\bfa{u}_h$ (to be founded in the next paragraph) is sufficiently close to the exact solution.  The main goal of this paper is to find a multiscale solution $\bfa{u}_{\tu{ms}}$ which is a good approximation of the fine-scale solution 
$\bfa{u}_h$.  
%in each Picard iteration and for the convergent ones at the end of Picard iteration procedure.
This is the reason why the GMsFEM is used to obtain the multiscale solution $\bfa{u}_{\tu{ms}}$. 

On the fine grid $\mathcal{T}_h$, we will approximate the solution of (\ref{2cem}), denoted by $\bfa{u}_h$ (or $\bfa{u}$ for brevity).  To fix the notation, we will use Picard iteration and the first-order (linear) finite elements for the computation of the fine-scale solution $\bfa{u}_h$.  In particular, we let $\bfa{V}_h\text{ } (\subset \bfa{V} = \bfa{H}_0^1(\Omega))$ be the first-order Galerkin finite element basis space with respect to the fine grid $\mathcal{T}_h$.  
%hat functions
Toward presenting the details of the Picard iteration algorithm, we define the bilinear form 
$a(\cdot, \cdot ; \cdot)$
\begin{equation}\label{biform}
 a(\bfa{u},\bfa{v}; |\bfa{Dw}|) = \int_{\Omega} \kappa
 (\bfa{x},|\bfa{Dw}|) 
 (\bfa{D} \bfa{u} \cdot \bfa{D} \bfa{v}) dx\,,
\end{equation}
and the functional $J(\cdot)$
\begin{equation}\label{func}
 J(\bfa{v}) = \int_{\Omega} \bfa{f} \cdot \bfa{v} dx\,.
\end{equation}

Given $\bfa{u}_h^n$, 
the next approximation $\bfa{u}_h^{n+1}$ is 
the solution of the linear elliptic equation 
\begin{equation}\label{leq1}
 a(\bfa{u}_h^{n+1},\bfa{v}; 
 |\bfa{D}(\bfa{u}_h^n)|) = J(\bfa{v}), \quad \forall \bfa{v} 
 \in \bfa{V}_h\,.
\end{equation}
This is an approximation of the linear equation 
\begin{equation}\label{leq2}
 -\ddiv (\kappa(\bfa{x},|\bfa{D}( \bfa{u}_h^n)|) 
 \bfa{D}( \bfa{u}_h^{n+1})) = \bfa{f}\,.
\end{equation}  

We reformulate the iteration (\ref{leq1}) in a matrix form.  
%When the bilinear form $a(\cdot, %\cdot ; |\bfa{D}(\bfa{u}^n)|)$ is symmetric 
%and positive-definite, 
That is, we define $\bfa{A}^n$ by 
\begin{equation}\label{matrix1}
 a(\bfa{w},\bfa{v};|\bfa{D}( \bfa{u}_h^n)|) = 
 \bfa{v}^{\text{T}}\bfa{A}^n \bfa{w} \quad \forall \bfa{v},\bfa{w} 
 \in \bfa{V}_h\,.
\end{equation}
and define vector $\bfa{b}$ by 
\begin{equation}\label{vector}
 J(\bfa{v}) = \bfa{v}^{\text{T}} \bfa{b}, \quad \forall \bfa{v} 
 \in \bfa{V}_h\,.
\end{equation}
Then, in $\bfa{V}_h$, the equation (\ref{leq1}) can be rewritten in the following matrix form:
\begin{equation}\label{matrix2}
 \bfa{A}^n \bfa{u}_h^{n+1} = \bfa{b}\,.
\end{equation}
%the fine-scale sol is taken as $\bfa{u}:= \bfa{u}^{r+1}$ for some $r \in \mathbb{N}$, when 
%$\|\bfa{u}^{r+1} - \bfa{u}^r\|_{\bfa{W}_0^{1,2}%(\Omega)}$ is sufficiently small \cite{enrich}.
%or use the relative criteria in the next paragraph.

Furthermore, at the $(n+1)$-th Picard iteration, we can solve (\ref{matrix2}) for the multiscale solution $\bfa{u}^{n+1}_{\tu{ms}} \in \bfa{V}^n_{\tu{ms}}$ by using the GMsFEM (to be discussed in the next Sections \ref{sec4} and \ref{sec5}), with multiscale basis functions in $\bfa{V}^n_{\tu{ms}}$ computed for $|\bfa{D} \bfa{u}_{\tu{ms}}^n|$ in each coarse region $w_i, i = 1, \cdots, N$.  

Each of $\bfa{u}_h$ and $\bfa{u}_{\tu{ms}}$ is computed in a separate Picard iteration procedure, whose termination criterion is that the relative $\bfa{L}^2$ difference is less than $\delta_0$, which can be found in Subsection \ref{pu} and Section \ref{sec6} 
($\delta_0=10^{-7}$).
%relative criteria

%or use residual in Yalchin GMsFEM nonlinear elliptic

%%%%%
\section{GMsFEM for nonlinear elasticity problem}\label{sec4}
\subsection{Overview}
We will construct the offline and online spaces.  Being motivated by \cite{gnone}, we will concentrate on the effects of the nonlinearities.  From the linearized formulation 
(\ref{leq2}), we can define offline and online basis functions following the general framework of GMsFEM. 

Given $\bfa{u}^n$ (which can stand for either $\bfa{u}_h^n$ or $\bfa{u}^n_{\tu{ms}}$, context-dependently), at the current $(n+1)$-th Picard iteration, we will obtain the fine-scale solution $\bfa{u}_h^{n+1} \in \bfa{V}_h$ by solving the variational problem
\begin{equation}\label{f3}
 a_n(\bfa{u}_h^{n+1}, \bfa{v}) = (\bfa{f}, \bfa{v}), \quad 
\forall \bfa{v} \in \bfa{V}_h\,,
\end{equation}
where
\begin{equation}\label{biform1}
 a_n(\bfa{w},\bfa{v}) = \int_{\Omega} \kappa
 (\bfa{x},|\bfa{Du}^n|) 
 (\bfa{D} \bfa{w} \cdot \bfa{D} \bfa{v}) dx\,.
\end{equation}
%Now, we explain further how the parametrized nonlinearities can be handled.  The parameter $\mu_n$ can be taken as the average values of $|\bfa{Du(x)}^n|$ in each coarse region $w_i$, with the notion \[\bar{G} = \frac{1}{|w_i|} \int_{w_i} G \, dx\,.\]
%More precisely, $\mu_n$ represents the dependence of the solution on $\overline{|\bfa{Du}^n|}$, which is still denoted by $|\bfa{Du}^n|$, for brevity. 

At the $n$-th Picard iteration, we equip the space $\bfa{V}_h$ with the energy norm $\|\bfa{v}\|^2_{\bfa{V}_h} = a_n(\bfa{v}, \bfa{v})$.
%weighted norm

%%%%
\subsection{General idea of GMsFEM}
For details of GMsFEM, we refer the readers to \cite{G2, chungres1, chungres, chung2016adaptive}.  In this paper, at the current $n$-th Picard iteration, we will consider the continuous Galerkin (CG) formulation, having a similar form to the fine-scale problem (\ref{f3}).  

First, we start with constructing snapshot functions.  Then, by solving a class of specific spectral problems in that snapshot space, for each coarse node $\bfa{x}_i$, we will obtain a set of multiscale basis functions $\{ \bfa{\psi}_k^i \,|\, k = 1, 2, \cdots, l_i \}$, such that each $\bfa{\psi}_k^i = \bfa{\psi}_k^{w_i}$ is supported on the coarse neighborhood $w_i$.  Furthermore, the basis functions satisfy a partition of unity property, that is, there exist coefficients $\alpha^i_k$ such that $\sum_{i=1}^{N_v} \sum_{k=1}^{l_i} \alpha^i_k \bfa{\psi}^i_k = 1$.  
%As in \cite{chungres}, one can use multiple basis functions per each coarse node and use different numbers of basis functions for different coarse nodes in our GMsFEM.  
With the constructed basis functions, their linear span (over $i = 1, \cdots, N_v, k = 1, \cdots l_i$) defines the approximate space $\bfa{V}^m_{\tu{ms}}$ (at the $m$-th inner iteration, to be specified in Section \ref{sec5}).  The GMsFEM solution $\bfa{u}^m_{\tu{ms}} \in \bfa{V}^m_{\tu{ms}}$ can then be obtained via CG global coupling, which is given through the variational form
\begin{equation}\label{ms3}
 a_n(\bfa{u}^m_{\tu{ms}}, \bfa{v}) = (\bfa{f}, \bfa{v})\,, 
 \quad \forall \bfa{v} \in \bfa{V}^m_{\tu{ms}}\,,
\end{equation}
where $a_n$ is defined by (\ref{biform1}).

In summary, one observes that the key component of the GMsFEM is the construction of local basis functions.  First, we will use only the so called offline basis functions, which can be computed in the offline stage.  Second, to improve the accuracy of the multiscale approximation, we will construct additional online basis functions that are problem-dependent and computed locally and adaptively, based on the offline basis functions and some local residuals.  As in \cite{chungres}, our results show that the combination of both offline and online basis functions will give a
rapid convergence of the multiscale solution $\bfa{u}^m_{\tu{ms}}$ to the fine-scale solution $\bfa{u}_h$.

%%%%
\subsection{Construction of offline multiscale basis functions}
At the current $n$-th Picard iteration, we will present the construction of the offline basis functions (\cite{chungres}).  We start with constructing, for each coarse subdomain $w_i$, a snapshot space $\bfa{V}^i_{\tu{snap}}$.  For simplicity, the index $i$ can be omitted when there is 
no confusion.  The snapshot space $\bfa{V}^i_{\tu{snap}}$ is a set of functions defined on $w_i$ and contains all or most necessary
components of the fine-scale solution restricted to $w_i$.  A spectral problem is then solved in the snapshot space to extract
the dominant modes, which are the offline basis functions and the resulting reduced space is called the offline space. 

\subsubsection{Snapshot space} 
The first choice of $\bfa{V}^i_{\tu{snap}}$ is the restriction
of the conforming space $\bfa{V}_h$ in $w_i$, and the resulting basis functions are called \textbf{spectral basis functions}.  Note that $\bfa{V}^i_{\tu{snap}}$ contains all possible fine-scale functions defined on $w_i$.  

The second choice of $\bfa{V}^i_{\tu{snap}}$ is the set of all $\kappa$-harmonic extensions, and the resulting basis functions are called \textbf{harmonic basis functions}.  More specifically, we denote the fine-grid function $\delta_j^h(\bfa{x}_k): = \delta_{jk}$ for $\bfa{x}_k \in J_h(w_i)$, where $J_h(w_i)$ denotes the set of all nodes of the fine mesh $\mathcal{T}_h$ belonging to $\partial w_i$.  The cardinality of $J_h(w_i)$ is denoted by $J_i$.  At the $n$-th Picard iteration, for each $j=1, \cdots, J_i$, the snapshot function $\bfa{\psi}^i_j$ is defined to be the solution to the following system
\begin{align*}
 -\tu{div}(\kappa(\bfa{x}, |\bfa{Du}_{\tu{ms}}^n|)\bfa{D\psi}^i_j) &= \bfa{0} \quad \text{in } w_i\,,\\
 \bfa{\psi}^i_j &= (\delta^h_j,0) \quad \text{on } \partial w_i\,.
\end{align*}
For each coarse region $w_i$, the corresponding local snapshot space $\bfa{V}^i_{\tu{snap}}$ is defined as $\bfa{V}^i_{\tu{snap}}:= \tu{span} \{\bfa{\psi}^i_j: j=1, \cdots, J_i \}$.  Then, one may define the global snapshot space $\bfa{V}_{\tu{snap}}$ as $\bfa{V}_{\tu{snap}}: = \oplus_{i=1}^{N_v} \bfa{V}^i_{\tu{snap}}$.

For simplicity, in this paper, we will use the first choice of $\bfa{V}^i_{\tu{snap}}$ consisting of the \textbf{spectral basis functions}. 
We also use this choice in our numerical simulations, and still use $J_i$ to denote the number of basis functions of $\bfa{V}^i_{\tu{snap}}$. 

\subsubsection{Offline multiscale basis construction}
To obtain the offline basis functions, we need to perform a space reduction by a spectral problem. The analysis in \cite{chungres} motivates the following construction.  The spectral problem that is needed for the purpose of space reduction is as follows: find $(\bfa{\psi}^i_j, \lambda^i_j) \in \bfa{V}^i_{\tu{snap}} \times \mathbb{R}, j= 1, 2, \cdots, J_i$ such that 
\begin{equation}\label{offb}
 \int_{w_i} \kappa(\bfa{x}, |\bfa{Du}^n_{\tu{ms}}|) \bfa{D\psi}^i_j \cdot \bfa{Dw} \, dx = \lambda^i_j
 \int_{w_i} \tilde{\kappa}(\bfa{x},|\bfa{Du}^n_{\tu{ms}}|) \bfa{\psi}^i_j \cdot \bfa{w} \, dx\,, 
 \quad \forall \bfa{w} \in \bfa{V}^i_{\tu{snap}}\,,
\end{equation}
where the weighted function $\tilde{\kappa}(\bfa{x},|\bfa{Du}^n_{\tu{ms}}|)$ is defined by (see \cite{chungres})
\[\tilde{\kappa}(\bfa{x},|\bfa{Du}^n_{\tu{ms}}|) =  \kappa(\bfa{x}, |\bfa{Du}^n_{\tu{ms}}|) \sum_{i=1}^{N_v} H^2 |\bfa{D} \chi_i|^2\,,\]
and $\{\chi_i\}$ is a set of standard multiscale finite element basis functions, which is a partition of unity, for the coarse node $\bfa{x}_i$ (that is, with linear boundary conditions for cell
problems).  Specifically, $\forall K \in w_i\,,$ $\chi_i$ is defined via 
\begin{align*}
-\ddiv(\kappa(\bfa{x},|\bfa{Du}^n_{\tu{ms}}|)\bfa{\zeta}_i ) 
&= \bfa{0} \textup{ in } K \in w_i\,,\\
%still \chi_i^0 (y_i) but must satisfy that equality
\bfa{\zeta}_i &= (\Phi_i,0)^T \textup{ on } \partial K, 
\quad \textup{ for all } K \in w_i \,,\\
\bfa{\zeta}_i &= \bfa{0} \textup{ on } \partial w_i\,,
\end{align*}
where $\Phi_i$ is linear and continuous on $\partial K$.  That is, the multiscale partition of unity is defined as 
$\chi_i = \tilde{\Phi}_i = (\bfa{\zeta}_i)_1$.  

After arranging the eigenvalues $\lambda_j^i$, $j = 1, 2, \cdots, J_i$ from (\ref{offb}) in ascending order, we choose the first $l_i$ eigenfunctions from (\ref{offb}), and denote them by $\bfa{\Psi}_1^{\tu{off}}, \cdots, \bfa{\Psi}_{l_i}^{\tu{off}}$.  Using these eigenfunctions, we can establish the corresponding eigenvectors in the space of snapshots via the formulation
\[\bfa{\phi}_k^{i,\tu{off}} = 
\sum_{j=1}^{J_i} (\Psi_k^{i,\tu{off}})_{j} \bfa{\psi}_j^{i, \tu{snap}}\,,\]
for $k = 1, \cdots , l_i$, where 
$(\Psi_k^{i,\tu{off}})_j$ denotes the $j$-th component of the vector $\bfa{\Psi}_k^{i,\tu{off}}$.  At the final step, the offline basis functions for the coarse neighborhood $w_i$ is defined by $\bfa{\psi}_k^{i, \tu{off}} = \chi_i \bfa{\phi}_k^{i, \tu{off}}$, where $\{\chi_i\}$ is a set of standard multiscale finite element basis functions, which is a partition of unity, for
% which is a partition of unity functions
the coarse neighborhood $w_i$.  We now define the local auxiliary offline multiscale space 
$\bfa{V}_{\tu{off}}^i$ as the linear span of all $\bfa{\psi}_k^{i,\tu{off}}, k = 1,2, \cdots, l_i$.

Using the notation in (\ref{ms3}), one can take $\bfa{V}^m_{\tu{ms}}$ as $\bfa{V}^m_{\tu{off}}:= \tu{span} \{\bfa{\psi}_k^{i,\tu{off}} \, | \, 1 \leq i \leq N_v\,, 1 \leq k \leq l_i \}$.  We refer to \cite{offconverge} for the convergence of the method within the current Picard iteration.

\section{Residual based online adaptive GMsFEM}\label{sec5}
As we mentioned in the previous Sections, some online basis functions are required to obtain a coarse representation of
the fine-scale solution and give a fast convergence of the corresponding adaptive enrichment algorithm.  In \cite{chungres}, such online adaptivity is proposed and mathematically analyzed.  More specifically, at the current $n$-th Picard iteration, when the local residual related to some coarse neighborhood $w_i$ is large (see Subsection \ref{oa}), one may construct a new basis function $\bfa{\phi}_i \in \bfa{V}_i = \bfa{H}^1_0(w_i) \cap \bfa{V}_h$ (with the equipped norm $\| \bfa{v}\|^2_{\bfa{V}_i} = \int_{w_i} \kappa(\bfa{x}, |\bfa{Du}^n_{\tu{ms}}|)|\bfa{Dv}|^2 \, dx$), and add it to the multiscale basis functions space.  It is further shown that if
the offline space contains sufficient information in the form of offline basis functions,
then the online basis construction results in an efficient approximation of the fine-scale
solution $\bfa{u}_h$.

At the considering $n$-th Picard iteration, we use the index $m\text{ } (\geq 1)$ to stand for the adaptive enrichment level.  Thus, $\bfa{V}^m_{\tu{ms}}$ denotes the corresponding GMsFEM space, and $\bfa{u}^m_{\tu{ms}}$ represents the corresponding solution
obtained in (\ref{ms3}).  The sequence of functions $\{\bfa{u}^m_{\tu{ms}}\}_{m\geq 1}$ will converge to the fine-scale solution $\bfa{u}_h$.  In this Section, we remark that our space $\bfa{V}^m_{\tu{ms}}$  can consist of both offline and online basis functions.  We will establish an approach for obtaining the space $\bfa{V}^{m+1}_{\tu{ms}}$
from $\bfa{V}^m_{\tu{ms}}$.

In the following paragraphs, based on \cite{chungres}, we present a framework for the construction of online basis functions.  By online basis functions, we mean basis functions that are computed during the adaptively iterative process, which contrasts with offline basis functions that are computed before the iterative process.  The online basis functions are computed based on some local residuals for the current multiscale solution, that is, the function $\bfa{u}^m_{\tu{ms}}$.  Hence, we realize that some offline basis functions are crucial for the computations of online basis functions.  We will also obtain the sufficiently large number of offline basis functions, which are required in order to get a rapidly converging
sequence of solutions.
%%%%%%%

At the current $n$-th Picard iteration, we are given a coarse neighborhood $w_i$ and an inner adaptive iteration $m$-th 
with the approximation space $\bfa{V}^m_{\tu{ms}}$.  Recall that the GMsFEM solution $\bfa{u}^m_{\tu{ms}} \in \bfa{V}^m_{\tu{ms}}$ can be obtained by solving (\ref{ms3}):
\begin{align*}
 a_n(\bfa{u}^m_{\tu{ms}}, \bfa{v}) = (\bfa{f}, \bfa{v})\,, 
 \quad \forall \bfa{v} \in \bfa{V}^m_{\tu{ms}}\,.
\end{align*}
Suppose that we need to add a basis function $\bfa{\phi}_i \in \bfa{V}_i$ on the $i$-th coarse neighborhood $w_i$.  Initially, one can set $\bfa{V}^0_{\tu{ms}} = \bfa{V}_{\tu{off}}$.  Let $\bfa{V}^{m+1}_{\tu{ms}} = \bfa{V}^m_{\tu{ms}} + \tu{span}\{\bfa{\phi}_i\}$ be the new approximation space, with $\bfa{u}^{m+1}_{\tu{ms}} \in \bfa{V}^{m+1}_{\tu{ms}}$ being the corresponding GMsFEM solution from (\ref{ms3}).  Let
\[R_i(\bfa{v})= (\bfa{f}, \bfa{v}) - a_n(\bfa{u}^m_{\tu{ms}},\bfa{v})= 
\int_{w_i} \bfa{f} \cdot \bfa{v} \, dx - 
\int_{w_i} \kappa(\bfa{x}, |\bfa{Du}_{\tu{ms}}^n|) \bfa{Du}^m_{\tu{ms}} \cdot \bfa{Dv} \, dx \,, \quad \forall \bfa{v} \in \bfa{V}_i  \,.\]
The argument from \cite{chungres} deduces that the new online basis function $\bfa{\phi}_i \in \bfa{V}_i$ is the solution of 
\begin{equation}\label{ms4}
 a_n(\bfa{\phi}_i, \bfa{v}) = R_i(\bfa{v})\,, \quad \forall \bfa{v} \in \bfa{V}_i\,,
\end{equation}
and $\| \bfa{\phi}_i \|_{\bfa{V}_i} = \| R_i \|_{\bfa{V}^{*}_i}\,.$  This means the residual norm $\|R_i \|_{\bfa{V}^{*}_i}$ (using $\bfa{H}^{-1}(w_i)$ norm) gives a 
%= \sup_{v \in V_i, \|v\|_{V_i}\leq 1}R_i(v).
measure on the quantity of reduction in energy error.  Also, it holds that 
\[\| \bfa{u}_h -(\bfa{u}^m_{\tu{ms}} + \alpha \bfa{\phi}_i)\|^2_{\bfa{V}_h} = \| \bfa{u}_h - \bfa{u}^m_{\tu{ms}} \|^2_{\bfa{V}_h} - \| \bfa{\phi}_i \|^2_{\bfa{V}_i}\,,\]
for $\alpha = a_n(\bfa{u}_h - \bfa{u}^m_{\tu{ms}}, \bfa{\phi}_i)$.  This algorithm is called the online adaptive GMsFEM because only online basis functions are used.  

The convergence of this algorithm is discussed in \cite{chungres}, within the current Picard iteration.

%%%%
\subsection{Error estimation in a Picard iteration}
At the current $n$-th Picard iteration, we show a sufficient condition for reduction in the error.  Let $I_p \subset \{1,2, \cdots,N_v\}$ be the index set over coarse neighborhoods $w_i$ $(i \in I_p)$, which are non-overlapping.  For each $i \in I_p$, we define the online basis functions $\bfa{\phi}_i \in \bfa{V}_i$ by the solution to the equation (\ref{ms4}).  Set $\bfa{V}^{m+1}_{\tu{ms}} = \bfa{V}^m_{\tu{ms}} \oplus \tu{span} \{ \bfa{\phi}_i: i \in I_p\}\,.$  Let $r_i = \| R_i \|_{\bfa{V}^{*}_i}$.  Let $\Lambda_p = \displaystyle \min_{ i \in I_p} 
\lambda^i_{l_i +1}$, where $\lambda^i_{l_i +1}$ is the $(l_i+1)$-th eigenvalue ($ k= l_i +1$) from (\ref{offb}) in the coarse region $w_i$.  

From (\cite{chungres}, equation (15)), we obtain the following estimate for the energy norm error reduction
\begin{equation}\label{errp}
 \| \bfa{u}_h -\bfa{u}^{m+1}_{\tu{ms}} \|_{\bfa{V}_h} 
 \leq \left(1 - \dfrac{\Lambda_p \sum_{i \in I_p} r_i^2(\lambda^i_{l_i+1})^{-1}}{C  \sum_{i=1}^{N_v} r_i^2 (\lambda^i_{l_i+1})^{-1}} \right)^{1/2} \| \bfa{u}_h - \bfa{u}^m_{\tu{ms}} \|_{\bfa{V}_h}\,,
\end{equation} 
where $C$ is a uniform constant, independent of the contrast in $\kappa(\bfa{x}, |\bfa{Du}^n_{\tu{ms}}|)$.  Toward error decreasing, for a large enough $\Lambda_p$, there is a need to pick sufficiently many offline basis functions, which means the online error reduction property (\cite{chungres}) as below. 

\begin{definition}
 We say that $\bfa{V}_{\tu{off}}$ satisfies Online Error Reduction Property (ONERP) if
 \[\dfrac{\Lambda_p \sum_{i \in I_p} r_i^2(\lambda^i_{l_i+1})^{-1}}{C  \sum_{i=1}^{N_v} r_i^2 (\lambda^i_{l_i+1})^{-1}} \geq \theta_0\,,\]
for some $\theta_0 > \gamma >0$, where $\gamma$ is independent of physical parameters such as contrast.
\end{definition}

Theoretically, the ONERP is required in order to obtain fast and robust convergence, which is independent of the contrast in the permeability, for general quantities of interest.  

%%%%%
\subsection{Online adaptive algorithm}\label{oa}
%For inner Picard iteration
Set $m=0$.  Pick a parameter $\theta \in (0,1]$ and denote $\bfa{V}^m_{\tu{ms}} = \bfa{V}^0_{\tu{ms}} = \bfa{V}_{\tu{off}}$.  Choose a small tolerance $\bfa{tol} \in \mathbb{R}_{+}$.  For each $m \in \mathbb{N}$, assume that $\bfa{V}^m_{\tu{ms}}$ is given.  Go to the following \textbf{Step 1}.\\

\noindent \textbf{Step 1:}  Solve for $\bfa{u}^m_{\tu{ms}} \in \bfa{V}^m_{\tu{ms}}$ from the equation (\ref{ms3}).\\

\noindent \textbf{Step 2:}  For each $i=1, \cdots, N_v$, compute the residual $r_i$ for the coarse region $w_i$.  Assume that we obtain
\[r_1 \geq r_2 \geq \cdots \geq r_{N_v}\,.\]

\noindent \textbf{Step 3:}  Pick the smallest integer $k_p$ such that
\[\theta \sum_{i=1}^{N_v} r_i^2 \leq \sum_{i=1}^{k_p} r_i^2\,.\]
Now, for $i=1, \cdots, k_p$, add basis functions $\bfa{\phi}_i$ (by solving (\ref{ms4})) to the space $\bfa{V}^m_{\tu{ms}}$.  The new multiscale basis functions space is defined as $\bfa{V}^{m+1}_{\tu{ms}}$.  That is
\[\bfa{V}^{m+1}_{\tu{ms}} = 
\bfa{V}^m_{\tu{ms}} \oplus \tu{span} \{\bfa{\phi}_i: 1 \leq i \leq k_p\}\,.\]

\noindent \textbf{Step 4:}  If $\displaystyle \sum_{i=1}^{N_v} r_i^2 \leq \bfa{tol}$ or the dimension of $\bfa{V}^{m+1}_{\tu{ms}}$ is sufficiently large, then stop.  Otherwise, set $m \gets m+1$ and go back to \textbf{Step 1}.

\begin{remark}
 If $\theta = 1$, then the adaptive enrichment is said to be uniform.
\end{remark}

In practice, we do not update the basis function space $\bfa{V}^{n+1}_{\tu{ms}}$ ($n \geq 0$) at every Picard iteration step $(n+1)$-th.  We consider a simple 
adaptive strategy to update the basis space. More specifically, at each Picard iteration $(n+1)$-th,
after updating the multiscale solution $\bfa{u}^{n+1}_{\tu{ms}}$ in $\bfa{V}^n_{\tu{ms}}$, we compute the coefficient $\kappa(\boldsymbol{x})=\dfrac{1}{1-\beta(\boldsymbol{x})|(\boldsymbol{Du}^{n+1}_{\tu{ms}})|}$, and the relative $\bfa{L}^2$  change of this updated coefficient and the coefficient corresponding
to the last step that the basis was updated.  If the change is larger than a predefined tolerance, then
we recompute the offline and online basis functions.
In particular, $\delta=0$ implies that we update the basis functions in every Picard iteration,
while $\delta=\infty$ implies that we do not update the basis functions. 
\subsection{GMsFEM for nonlinear elasticity}\label{pu}
%For Picard interations
We summarize the major  steps of using the GMsFEM to solve problem (\ref{form3}-\ref{D}):
pick a basis update tolerance value $\delta\in \mathbb{R}_{+}$ and Picard iteration
termination tolerance value $\delta_0\in \mathbb{R}_{+}$ (where $\delta_0$ and $\delta$ to be specified in Section \ref{sec6}). We also take an initial guess of $\bfa{u}^{\text{old}}_{\tu{ms}}$, 
%n=0 to remember the level of Picard iteration
and
compute $\kappa^{\text{old}}(\boldsymbol{x})=\dfrac{1}{1-\beta(\boldsymbol{x})|(\bfa{Du}^{\text{old}}_{\tu{ms}})|}$ 
and the multiscale space  $\bfa{V}^{\text{old}}_{\tu{ms}}$, then we repeat following steps:

\noindent \textbf{Step 1:}  
%n=n+1
Solve for $\bfa{u}^{\text{new}}_{\tu{ms}} \in \bfa{V}^{\text{old}}_{\tu{ms}}$ from the following equation (as (\ref{ms3})): 
\begin{equation}\label{eold}
 a_{\tu{old}}(\bfa{u}^{\tu{new}}_{\tu{ms}}, \bfa{v}) = (\bfa{f}, \bfa{v}) \quad \forall \bfa{v} \in \bfa{V}^{\text{old}}_{\tu{ms}}\,.
\end{equation}
If $\dfrac{\| \bfa{u}^{\text{new}}_{\tu{ms}}-\bfa{u}^{\text{old}}_{\tu{ms}}\|_{\bfa{V}_h}}{\| \bfa{u}^{\text{old}}_{\tu{ms}}\|_{\bfa{V}_h}}>\delta_0$,
let $\bfa{u}^{\text{old}}_{\tu{ms}}=
\bfa{u}^{\text{new}}_{\tu{ms}}$
and go to \textbf{Step 2}. \\

\noindent \textbf{Step 2:}  Calculate $\kappa^{\text{new}}(\boldsymbol{x})=\dfrac{1}{1-\beta(\boldsymbol{x})|(\bfa{Du}^{\text{new}}_{\tu{ms}})|}$. 
If $\dfrac{||\kappa^{\text{old}}(\boldsymbol{x})-\kappa^{\text{new}}(\boldsymbol{x})||_{\bfa{L}^2(\Omega)}}{||\kappa^{\text{old}}(\boldsymbol{x})||_{\bfa{L}^2(\Omega)}} > \delta$, compute the new basis functions space $\bfa{V}^{\text{new}}_{\tu{ms}}$, 
%by algorithm for adaptive offline or online in Subsection 5.2.
let $\bfa{V}^{\text{old}}_{\tu{ms}}$=$\bfa{V}^{\text{new}}_{\tu{ms}}$
and $\kappa^{\text{old}}(\boldsymbol{x})=\kappa^{\text{new}}(\boldsymbol{x})$.  Then go to \textbf{Step 1}.

%%%

\section{Numerical examples}\label{sec6}
In this section, we will present several test cases to show
the performance of our GMsFEM. 
In our simulations, we consider two choices of $\beta(\boldsymbol{x})$,
which are shown in Figure \ref{fig:model}.
In these two test cases, the blue region represents $\beta(\boldsymbol{x})=1$
and the red regions (channels) represent $\beta(\boldsymbol{x})=10^{-4}$. In addition, the coefficient $\beta(\boldsymbol{x})$ is defined
on a $200\times 200$ fine grid. 
For the coarse grid size, we choose $H=1/20$. We take the source term $f=(\sqrt{x^2+y^2+1},\sqrt{x^2+y^2+1})$,
$\delta_0=10^{-7}$, and $\delta$ as in the following Tables~\ref{tab:m1off}-\ref{tab:new4}.

\begin{figure}[H]
	\centering
	\subfigure[Test model $1$ ($\beta(\boldsymbol{x})$).]{
		\includegraphics[width=2.5in]{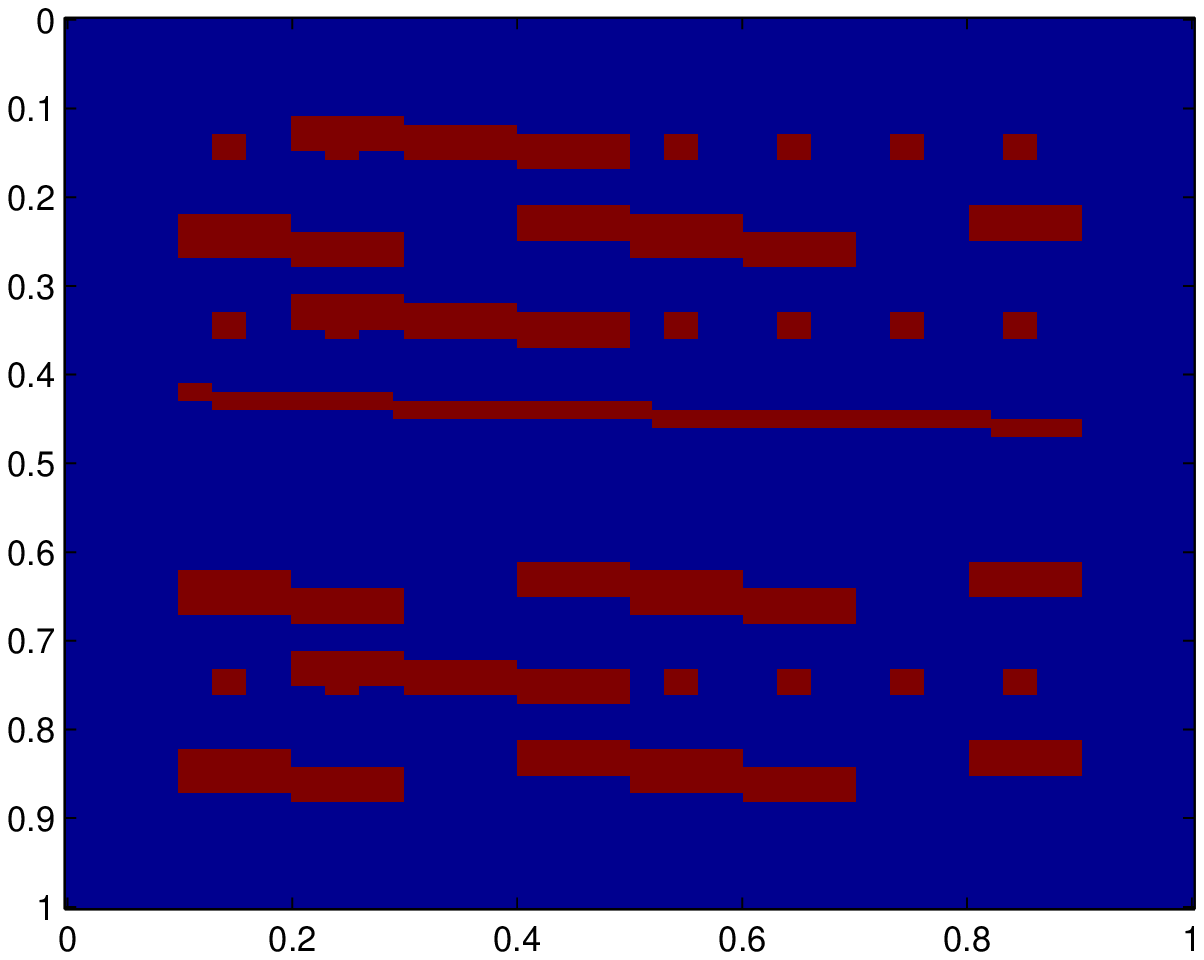}}
	\subfigure[Test model $2$ ($\beta(\boldsymbol{x})$).]{
		\includegraphics[width=2.5in]{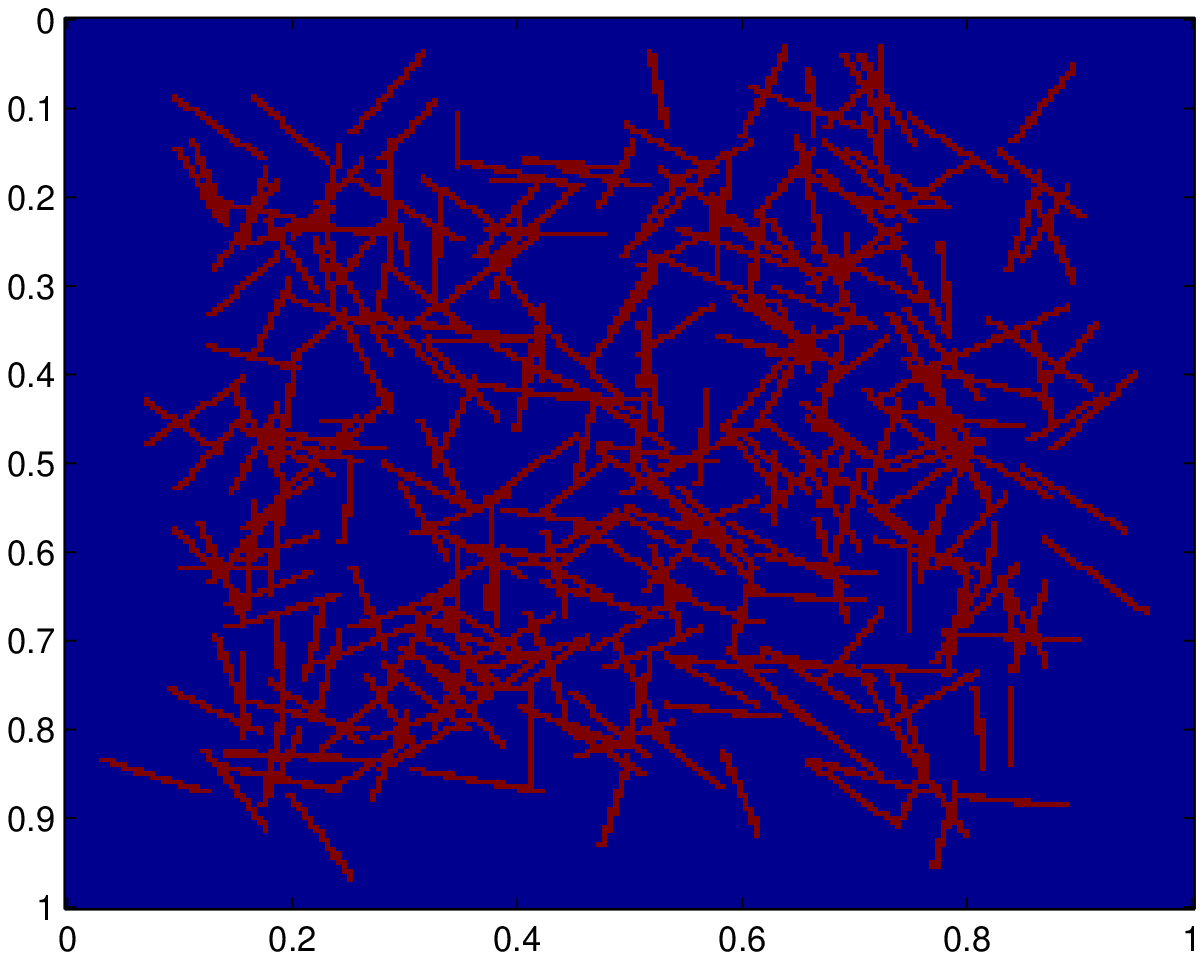}}
	\caption{Coefficients in subdomains (blue: subdomain 1; red: subdomain 2).}
	\label{fig:model} 
\end{figure}

At the $(n+1)$-th Picard iteration, to quantify the quality of our multiscale solutions, 
we use the following
relative  $\bfa{L}^2$ norm error and weighted $\bfa{H}^1$ norm error:
\begin{equation*}
e_{\bfa{L}^2}=\frac{||(\boldsymbol{u_{\tu{ms}}-u_h})||_{\bfa{L}^2(\Omega)}}{||\boldsymbol{u_h}||_{\bfa{L}^2(\Omega)}},\quad
e_{\bfa{H}^1}=\sqrt{\frac{a_n(\boldsymbol{u_{\tu{ms}}-u_h},\boldsymbol{u_{\tu{ms}}-u_h})}
	{a_n(\boldsymbol{u_h},\boldsymbol{u_h})}} 
\end{equation*}
where the reference solution $\boldsymbol{u_h}$ is computed on the fine grid, and the bilinear form $a_n$ is defined in (\ref{biform1}).

We will first present the results using only offline basis functions. We tested the performance
on the use of various number of basis functions, which is denoted by $Nb$. 
The error history with different number of basis functions and update tolerance $\delta$ is shown in Table~\ref{tab:m1off} for the Model 1
and in Table~\ref{tab:m2off} for the Model 2.
First of all, we observe that by updating the basis functions more frequently, 
one can obtain better approximate solutions.
On the other hand, we observe that the errors stay around a fixed level. 
This motivates us to use the online basis functions in order to improve the approximation quality. 

In our next test, we consider the addition of online basis functions. 
In this case, we will construct both offline and online basis functions.
More precisely, we will first find the new offline basis functions by the updated coefficient.
After that, we solve the PDE using the new offline basis functions. Then, using the residual,
we construct online basis functions. 
The error history is shown in Table~\ref{tab:m1on} for the Model 1
and in Table~\ref{tab:m2on} for the Model 2.
In these tables, we use $1+2$ in the $Nb$ column to represent the use of $1$ offline
basis function and $2$ online basis functions. 
From these tables, we observe that updating the basis functions will
produce more significant improvement in the approximate solutions.
Also, we observe that the use of online basis functions is able to
produce more accurate solutions.  %Even though we have not illustrated ONERP (as we focus more on handling the nonlinearities), our numerical results indicate that the more offline basis functions we use, the more accurate solution and desired convergence we get. 

%%%%%%%%%%%%%%%%%%%%%%%%%%%%%%GMSFEM%%%%%%%%%%%%%%%%%%%%%%%%%%%%%%%%%%%%%%%%%%%%%%%%%%%
\begin{table}[H]
	\centering
	\begin{adjustbox}{max width=\textwidth}
		\begin{tabular}{|c|c|c|c|c|c|c|c|c|c|c|}
			\hline 
			\multirow{2}{*}{$Nb$} & \multicolumn{2}{c|}{$\delta=\infty$} & \multicolumn{2}{c|}{$\delta=0.5$} & \multicolumn{2}{c|}{$\delta=0.25$}&
			\multicolumn{2}{c|}{$\delta=0.1$}& \multicolumn{2}{c|}{$\delta=0$}\tabularnewline
			\cline{2-11}
			& $e_{L^2}$ & $e_{H^1}$ & $e_{L^2}$ & $e_{H^1} $& $e_{L^2}$ & $e_{H^1} $& $e_{L^2}$ & $e_{H^1} $& $e_{L^2}$ & $e_{H^1} $ \tabularnewline
			\hline
			1&3.992e-02&1.175e-01&3.992e-02&1.175e-01&3.992e-02&1.175e-01&3.992e-02&1.175e-01&3.992e-02&1.175e-01\tabularnewline
			\hline
			3 &9.403e-03& 7.939e-02&9.403e-03&7.939e-02&9.403e-03&7.939e-02&9.403e-03&7.939e-02&9.403e-03& 7.939e-02\tabularnewline
			\hline
			5 &7.754e-03&6.864e-02&7.484e-03&6.769e-02&7.623e-03&6.817e-02&7.574e-03&6.800e-02&7.589e-03&6.805e-02\tabularnewline
			\hline	
			7 &5.815e-03&6.215e-02&3.754e-03&5.401e-02&4.140e-03& 5.589e-02&3.958e-03&5.498e-02&4.007e-03&5.526e-02\tabularnewline
			\hline				
		\end{tabular}
	\end{adjustbox}
	\caption{Results with different update frequency, GMsFEM, $H=1/20$, model 1. }
	\label{tab:m1off}
\end{table}
\begin{table}[H]
	\centering
	\begin{adjustbox}{max width=\textwidth}
		\begin{tabular}{|c|c|c|c|c|c|c|c|c|c|c|}
			\hline 
			\multirow{2}{*}{$Nb$} & \multicolumn{2}{c|}{$\delta=\infty$} & \multicolumn{2}{c|}{$\delta=0.5$} & \multicolumn{2}{c|}{$\delta=0.25$}&
			\multicolumn{2}{c|}{$\delta=0.1$}& \multicolumn{2}{c|}{$\delta=0$}\tabularnewline
			\cline{2-11}
			& $e_{L^2}$ & $e_{H^1}$ & $e_{L^2}$ & $e_{H^1} $& $e_{L^2}$ & $e_{H^1} $& $e_{L^2}$ & $e_{H^1} $ & $e_{L^2}$ & $e_{H^1} $\tabularnewline
			\hline
			1+2&1.140e-02&6.602e-02&4.981e-03&2.884e-02&4.664e-03&1.718e-02&4.612e-03&1.872e-02&4.476e-03&1.163e-02\tabularnewline
			\hline
			3+2 &7.319e-03&5.572e-02&1.197e-03&2.325e-02&6.990e-04&1.006e-02&6.897e-04&1.294e-02&1.030e-06&3.281e-05\tabularnewline
			\hline			
			5+2 &6.512e-03&5.190e-02&7.936e-04&1.708e-02&5.866e-04&9.318e-03&5.238e-04&1.011e-02&2.922e-07&1.300e-05\tabularnewline
			\hline	
			1+4			 &8.429e-03&5.815e-02&1.492e-03&2.348e-02&7.780e-04&1.067e-02&8.478e-04 & 1.360e-02&3.287e-05&2.639e-04\tabularnewline
			\hline				
		\end{tabular}
	\end{adjustbox}
	\caption{Results with different update frequency, GMsFEM, $H=1/20$, model 1. }
	\label{tab:m1on}
\end{table}

\begin{table}[H]
	\centering
	\begin{adjustbox}{max width=\textwidth}
		\begin{tabular}{|c|c|c|c|c|c|c|c|c|c|c|}
			\hline 
			\multirow{2}{*}{$Nb$} & \multicolumn{2}{c|}{$\delta=\infty$} & \multicolumn{2}{c|}{$\delta=0.5$} & \multicolumn{2}{c|}{$\delta=0.25$}&
			\multicolumn{2}{c|}{$\delta=0.1$}& \multicolumn{2}{c|}{$\delta=0$}\tabularnewline
			\cline{2-11}
			& $e_{L^2}$ & $e_{H^1}$ & $e_{L^2}$ & $e_{H^1} $& $e_{L^2}$ & $e_{H^1} $& $e_{L^2}$ & $e_{H^1} $& $e_{L^2}$ & $e_{H^1} $ \tabularnewline
			\hline
			1&4.072e-02&1.228e-01&4.072e-02&1.228e-01&4.072e-02&1.228e-01&4.072e-02&1.228e-01&4.072e-02&1.228e-01\tabularnewline
			\hline
			3 &1.112e-02&9.049e-02&1.112e-02&9.049e-02&1.112e-02&9.049e-02&1.112e-02&9.049e-02&1.112e-02&9.049e-02\tabularnewline
			\hline
			5 &1.001e-02&8.277e-02&9.863e-03&8.209e-02&9.953e-03&8.253e-02&9.922e-03&8.238e-02&9.933e-03&8.244e-02\tabularnewline
			\hline	
			7 &9.501e-03&8.059e-02&7.520e-03&7.173e-02&8.295e-03&7.558e-02&7.979e-03&7.392e-02&8.068e-03&7.443e-02\tabularnewline
			\hline				
		\end{tabular}
	\end{adjustbox}
	\caption{Results with different update frequency, GMsFEM, $H=1/20$, model 2. }
	\label{tab:m2off}
\end{table}

\begin{table}[H]
	\centering
	\begin{adjustbox}{max width=\textwidth}
		\begin{tabular}{|c|c|c|c|c|c|c|c|c|c|c|}
			\hline 
			\multirow{2}{*}{$Nb$} & \multicolumn{2}{c|}{$\delta=\infty$} & \multicolumn{2}{c|}{$\delta=0.5$} & \multicolumn{2}{c|}{$\delta=0.25$}&
			\multicolumn{2}{c|}{$\delta=0.1$}& \multicolumn{2}{c|}{$\delta=0$}\tabularnewline
			\cline{2-11}
			& $e_{L^2}$ & $e_{H^1}$ & $e_{L^2}$ & $e_{H^1} $& $e_{L^2}$ & $e_{H^1} $& $e_{L^2}$ & $e_{H^1} $& $e_{L^2}$ & $e_{H^1} $ \tabularnewline
			\hline
			1+2 &9.917e-03&7.420e-02&4.065e-03&3.159e-02&3.055e-03&2.254e-02&3.171e-03&2.014e-02&2.835e-03&1.186e-02\tabularnewline
			\hline
			3+2 &8.956e-03&7.090e-02&1.350e-03&2.593e-02&1.324e-03&1.738e-02&7.605e-04&1.404e-02&9.314e-07&2.907e-05\tabularnewline
			\hline	
			5+2 & 8.636e-03&6.951e-02&9.361e-04&1.983e-02&1.047e-03&1.565e-02&5.917e-04&1.163e-02&3.039e-07&1.181e-05\tabularnewline
			\hline	
			1+4 &9.130e-03 &7.147e-02&1.673e-03&2.628e-02&1.352e-03&1.744e-02&8.785e-04 & 1.451e-02&4.033e-05&1.504e-04\tabularnewline
			\hline							
		\end{tabular}
	\end{adjustbox}
	\caption{Results with different update frequency, GMsFEM, $H=1/20$, model 2. }
	\label{tab:m2on}
\end{table}

%%%%%%second sets of simulation%%%%
In our second set of simulations, we repeat the above tests
but with $\beta(\boldsymbol{x})=10^{4}$ in the red regions (channels). We take $f=10^{-4}(\sqrt{x^2+y^2+1},\sqrt{x^2+y^2+1})$
to ensure the convergence of the Picard iteration procedure.
From the results in Tables~\ref{tab:new1}-\ref{tab:new4},
we observe similar results to the first set of simulations.  These results indicate that our method
is robust with respect to the contrast in the coefficient, and is able to give accurate approximate solution
with a few local basis functions per each coarse neighborhood.

\begin{table}[H]
	\centering
	\begin{adjustbox}{max width=\textwidth}
		\begin{tabular}{|c|c|c|c|c|c|c|c|c|c|c|}
			\hline 
			\multirow{2}{*}{$Nb$} & \multicolumn{2}{c|}{$\delta=\infty$} & \multicolumn{2}{c|}{$\delta=0.5$} & \multicolumn{2}{c|}{$\delta=0.25$}&
			\multicolumn{2}{c|}{$\delta=0.1$}& \multicolumn{2}{c|}{$\delta=0$}\tabularnewline
			\cline{2-11}
			& $e_{L^2}$ & $e_{H^1}$ & $e_{L^2}$ & $e_{H^1} $& $e_{L^2}$ & $e_{H^1} $& $e_{L^2}$ & $e_{H^1} $& $e_{L^2}$ & $e_{H^1} $ \tabularnewline
			\hline
			1&3.288e-02&1.083e-01&3.288e-02&1.083e-01&3.288e-02&1.083e-01&3.288e-02&1.083e-01&3.288e-02&1.083e-01\tabularnewline
			\hline
			3 &1.051e-02&8.021e-02&1.051e-02&8.021e-02&1.051e-02&8.021e-02&1.051e-02&8.021e-02&1.051e-02&8.021e-02\tabularnewline
			\hline
			5 &8.279e-03&6.707e-02&8.279e-03&6.707e-02&8.279e-03&6.707e-02&8.412e-03&6.739e-02&8.395e-03&6.734e-02\tabularnewline
			\hline	
			7 &6.394e-03&6.218e-02&6.394e-03&6.218e-02&6.394e-03&6.218e-02&5.084e-03&5.906e-02&5.283e-03&5.978e-02\tabularnewline
			\hline				
		\end{tabular}
	\end{adjustbox}
	\caption{Results with different update frequency, GMsFEM, $H=1/20$, model 1, $\beta(\bfa{x})$ value in subdomain 2 is $10^4$. }
	\label{tab:new1}
\end{table}

\begin{table}[H]
	\centering
	\begin{adjustbox}{max width=\textwidth}
		\begin{tabular}{|c|c|c|c|c|c|c|c|c|c|c|}
			\hline 
			\multirow{2}{*}{$Nb$} & \multicolumn{2}{c|}{$\delta=\infty$} & \multicolumn{2}{c|}{$\delta=0.5$} & \multicolumn{2}{c|}{$\delta=0.25$}&
			\multicolumn{2}{c|}{$\delta=0.1$}& \multicolumn{2}{c|}{$\delta=0$}\tabularnewline
			\cline{2-11}
			& $e_{L^2}$ & $e_{H^1}$ & $e_{L^2}$ & $e_{H^1} $& $e_{L^2}$ & $e_{H^1} $& $e_{L^2}$ & $e_{H^1} $& $e_{L^2}$ & $e_{H^1} $ \tabularnewline
			\hline
			1+2&9.329e-03&5.461e-02&9.329e-03&5.461e-02&9.329e-03&5.461e-02&3.265e-03&1.596e-02&2.758e-03&1.153e-02\tabularnewline
			\hline
			3+2 &7.168e-03&4.745e-02&7.168e-03&4.745e-02&7.168e-03&4.745e-02&4.361e-04&7.253e-03&1.218e-06&3.714e-05\tabularnewline
			\hline
			5+2 &6.156e-03&4.414e-02&6.156e-03&4.414e-02&6.156e-03&4.414e-02&3.571e-04&6.565e-03&4.266e-07&1.570e-05\tabularnewline
			\hline	
			1+4 &7.590e-03&4.928e-02&7.590e-03&4.928e-02&7.590e-03&4.928e-02&6.172e-04&7.747e-03&2.060e-05&1.008e-04\tabularnewline
			\hline							
		\end{tabular}
	\end{adjustbox}
	\caption{Results with different update frequency, GMsFEM, $H=1/20$, model 1,
		 $\beta(\bfa{x})$ value in subdomain 2 is $10^4$. }
	\label{}
\end{table}

\begin{table}[H]
	\centering
	\begin{adjustbox}{max width=\textwidth}
		\begin{tabular}{|c|c|c|c|c|c|c|c|c|c|c|}
			\hline 
			\multirow{2}{*}{$Nb$} & \multicolumn{2}{c|}{$\delta=\infty$} & \multicolumn{2}{c|}{$\delta=0.5$} & \multicolumn{2}{c|}{$\delta=0.25$}&
			\multicolumn{2}{c|}{$\delta=0.1$}& \multicolumn{2}{c|}{$\delta=0$}\tabularnewline
			\cline{2-11}
			& $e_{L^2}$ & $e_{H^1}$ & $e_{L^2}$ & $e_{H^1} $& $e_{L^2}$ & $e_{H^1} $& $e_{L^2}$ & $e_{H^1} $& $e_{L^2}$ & $e_{H^1} $ \tabularnewline
			\hline
			1&3.476e-02&1.133e-01&3.476e-02&1.133e-01&3.476e-02&1.133e-01&3.476e-02&1.133e-01&3.476e-02&1.133e-01\tabularnewline
			\hline
			3 &1.678e-02&9.126e-02&1.678e-02&9.126e-02&1.678e-02&9.126e-02&1.678e-02&9.126e-02&1.678e-02&9.126e-02\tabularnewline
			\hline
			5 &1.435e-02&8.065e-02&1.435e-02&8.065e-02&1.435e-02&8.065e-02&1.446e-02&8.065e-02&1.447e-02&8.065e-02\tabularnewline
			\hline	
			7 &1.323e-02&7.853e-02&1.323e-02&7.853e-02&1.323e-02&7.853e-02&1.134e-02&7.853e-02&1.166e-02&7.630e-02\tabularnewline
			\hline			
		\end{tabular}
	\end{adjustbox}
	\caption{Results with different update frequency, GMsFEM, $H=1/20$, model 2, $\beta(\bfa{x})$ value in subdomain 2 is $10^4$. }
	\label{}
\end{table}

\begin{table}[H]
	\centering
	\begin{adjustbox}{max width=\textwidth}
		\begin{tabular}{|c|c|c|c|c|c|c|c|c|c|c|}
			\hline 
			\multirow{2}{*}{$Nb$} & \multicolumn{2}{c|}{$\delta=\infty$} & \multicolumn{2}{c|}{$\delta=0.5$} & \multicolumn{2}{c|}{$\delta=0.25$}&
			\multicolumn{2}{c|}{$\delta=0.1$}& \multicolumn{2}{c|}{$\delta=0$}\tabularnewline
			\cline{2-11}
			& $e_{L^2}$ & $e_{H^1}$ & $e_{L^2}$ & $e_{H^1} $& $e_{L^2}$ & $e_{H^1} $& $e_{L^2}$ & $e_{H^1} $& $e_{L^2}$ & $e_{H^1} $ \tabularnewline
			\hline
			1+2&1.372e-02&6.557e-02&1.372e-02&6.557e-02&1.372e-02&6.557e-02&2.924e-03&1.757e-02&2.279e-03&1.127e-02\tabularnewline
			\hline
			3+2 &1.253e-02&6.323e-02&1.253e-02&6.323e-02&1.253e-02&6.323e-02&5.954e-04&1.034e-02&1.210e-06&3.509e-05\tabularnewline
			\hline
			5+2 &1.161e-02&6.152e-02&1.161e-02&6.152e-02&1.161e-02&6.152e-02&4.242e-04&8.949e-03&3.447e-07&1.271e-05\tabularnewline
			\hline	
			1+4 &1.275e-02&6.371e-02&1.275e-02& 6.371e-02&1.275e-02&6.371e-02&6.797e-04&1.067e-02&8.580e-05&1.352e-04\tabularnewline
			\hline							
		\end{tabular}
	\end{adjustbox}
	\caption{Results with different update frequency, GMsFEM, $H=1/20$, model 2, $\beta(\bfa{x})$ value in subdomain 2 is $10^4$. }
	\label{tab:new4}
\end{table}

%%%%%%%%
\section{Conclusions}\label{sec7}
In this paper, we propose a GMsFEM framework for a strain-limiting nonlinear elasticity model.  The main idea here is the combination of Picard iteration procedure (for linearization) and the two types (offline and online) of basis functions within GMsFEM (for handling the multiple scales and high contrast of materials).  This means that at each Picard iteration, the problem is linear; and in each coarse neighborhood, we use offline multiscale basis functions (whose number is determined by a local error indicator) or combine them with residual based online adaptive basis functions (which are only added in regions with large errors, and can capture global features of the solution).      

Our numerical results show that the combination of offline and online basis functions is able to give accurate solutions, accelerate the convergence, and reduce computational cost with only a small number of Picard iterations as well as basis functions per each coarse region.  In a future contribution, we will address the development of this GMsFEM using the constraint energy minimization approach \cite{cem1,cem2f,cemgle}, for nonlinear problems.

%%%%%Explanation%%%%%
%At each Picard iteration, compute new solution $u^{n+1}$ based on $u^n$.  Then, with \kappa{x, |Du^{n+1}|}, we find offline basis functions.  
%
%For $\delta = \infty, at Picard $n+1$-th, choose Nb = 1 means l_i = 1.  At Picard $n+2$-th, choose Nb = 3 means l_i = 3.  No update at all.
%
%For $\delta = 0, update basis at every Picard iteration.
%%%%%

\vspace{20pt}

\noindent \textbf{Acknowledgments.} 
EC's work is partially supported by Hong Kong RGC General Research Fund (Projects: 14317516 and 14304217)
and CUHK Direct Grant for Research 2017-18.

%%%%%%%%%%%%%
\bibliographystyle{plain} 

\bibliography{1m_elliptic1}

\begin{thebibliography}{10}

\bibitem{Beck2017}
Lisa Beck, Miroslav Bul{\'i}{\v{c}}ek, Josef M{\'a}lek, and Endre S{\"u}li.
\newblock On the existence of integrable solutions to nonlinear elliptic
  systems and variational problems with linear growth.
\newblock {\em Archive for Rational Mechanics and Analysis}, 225(2):717--769,
  Aug 2017.

\bibitem{gnone}
Donald~L. Brown and Maria Vasilyeva.
\newblock A generalized multiscale finite element method for poroelasticity
  problems {II}: Nonlinear coupling.
\newblock {\em Journal of Computational and Applied Mathematics}, 297:132 --
  146, 2016.

\bibitem{B-M-S}
M.~Bul{\'i}{\u c}ek, J.~M{\'a}lek, and E.~S{\"u}li.
\newblock {Analysis and approximation of a strain-limiting nonlinear elastic
  model}.
\newblock {\em Mathematics and Mechanics of Solids}, 20(I):{92--118}, 2015.
\newblock DOI: 10.1177/1081286514543601.

\bibitem{BMRS14}
Miroslav Bul\'{i}\v{c}ek, Josef M\'{a}lek, K.~R. Rajagopal, and Endre S\"{u}li.
\newblock {On elastic solids with limiting small strain: modelling and
  analysis}.
\newblock {\em EMS Surveys in Mathematical Sciences}, 1(2):283--332, 2014.

\bibitem{chung2016adaptive}
Eric Chung, Yalchin Efendiev, and Thomas~Y Hou.
\newblock Adaptive multiscale model reduction with generalized multiscale
  finite element methods.
\newblock {\em Journal of Computational Physics}, 320:69--95, 2016.

\bibitem{yalchin18a}
Eric Chung, Yalchin Efendiev, Ke~Shi, and Shuai Ye.
\newblock A multiscale model reduction method for nonlinear monotone elliptic
  equations in heterogeneous media.
\newblock {\em Networks \& Heterogeneous Media}, 12(4):619--642, 2017.

\bibitem{sara}
Eric Chung, Sara Pollock, and Sai-Mang Pun.
\newblock {Online basis construction for goal-oriented adaptivity in the
  Generalized Multiscale Finite Element Method}.
\newblock Preprint, https://arxiv.org/abs/1812.02290, 2018.

\bibitem{gle}
Eric~T. Chung, Yalchin Efendiev, and Shubin Fu.
\newblock Generalized multiscale finite element method for elasticity
  equations.
\newblock {\em GEM - International Journal on Geomathematics}, 5(2):225--254,
  Nov 2014.

\bibitem{chungres}
Eric~T. Chung, Yalchin Efendiev, and Wing~Tat Leung.
\newblock Residual-driven online generalized multiscale finite element methods.
\newblock {\em Journal of Computational Physics}, 302:176 -- 190, 2015.

\bibitem{cem1}
Eric~T. Chung, Yalchin Efendiev, and Wing~Tat Leung.
\newblock Constraint energy minimizing generalized multiscale finite element
  method.
\newblock {\em Computer Methods in Applied Mechanics and Engineering}, 339:298
  -- 319, 2018.

\bibitem{cem2f}
Eric~T. Chung, Yalchin Efendiev, and Wing~Tat Leung.
\newblock Fast online generalized multiscale finite element method using
  constraint energy minimization.
\newblock {\em Journal of Computational Physics}, 355:450 -- 463, 2018.

\bibitem{chungres1}
Eric~T. Chung, Yalchin Efendiev, and Guanglian Li.
\newblock {An adaptive GMsFEM for high-contrast flow problems}.
\newblock {\em Journal of Computational Physics}, 273:54 -- 76, 2014.

\bibitem{C-G-K}
P.~G. Ciarlet, G.~Geymonat, and F.~Krasucki.
\newblock {A new duality approach to elasticity}.
\newblock {\em Mathematical Models and Methods in Applied Sciences}, 22(1):{21
  pages}, 2012.
\newblock DOI: 10.1142/S0218202512005861.

\bibitem{Msnon}
Y.~Efendiev, T.~Hou, and V.~Ginting.
\newblock {Multiscale finite element methods for nonlinear problems and their
  applications}.
\newblock {\em Commun. Math. Sci.}, 2(4):553--589, 2004.

\bibitem{G1}
Yalchin Efendiev, Juan Galvis, and Thomas~Y. Hou.
\newblock {Generalized multiscale finite element methods (GMsFEM)}.
\newblock {\em J. Comput. Phys.}, 251:116--135, October 2013.

\bibitem{G2}
Yalchin Efendiev, Juan Galvis, Guanglian Li, and Michael Presho.
\newblock {Generalized multiscale finite element methods. Nonlinear elliptic
  equations}.
\newblock {\em Communications in Computational Physics}, 15(3):733--755, 003
  2014.

\bibitem{offconverge}
Yalchin Efendiev, Juan Galvis, and Xiao-Hui Wu.
\newblock Multiscale finite element methods for high-contrast problems using
  local spectral basis functions.
\newblock {\em Journal of Computational Physics}, 230(4):937 -- 955, 2011.

\bibitem{Ms}
Yalchin Efendiev and Thomas~Y. Hou.
\newblock {\em {Multiscale finite element methods. Theory and applications}}.
\newblock Springer-Verlag New York, 2009.

\bibitem{cemgle}
Shubin Fu and Eric~T. Chung.
\newblock {Constraint energy minimizing generalized multiscale finite element
  method for high-contrast linear elasticity problem}.
\newblock Preprint, https://arxiv.org/abs/1809.03726, 2018.

\bibitem{B-Mai-Walton}
Tina Mai and Jay~R. Walton.
\newblock {On monotonicity for strain-limiting theories of elasticity}.
\newblock {\em Journal of Elasticity}, 120(I):{39--65}, 2015.
\newblock DOI: 10.1007/s10659-014-9503-4.

\bibitem{A-Mai-Walton}
Tina Mai and Jay~R. Walton.
\newblock {On strong ellipticity for implicit and strain-limiting theories of
  elasticity}.
\newblock {\em Mathematics and Mechanics of Solids}, 20(II):{121--139}, 2015.
\newblock DOI: 10.1177/1081286514544254.

\bibitem{Raji03}
K.~R. Rajagopal.
\newblock On implicit constitutive theories.
\newblock {\em Applications of Mathematics}, 48(4):279--319, 2003.

\bibitem{KRR-ZAMP2007}
K.~R. Rajagopal.
\newblock {The elasticity of elasticity}.
\newblock {\em Z. Angew. Math. Phys.}, {58}({2}):{309--317}, {2007}.

\bibitem{KRR-MMS2011b}
K.~R. Rajagopal.
\newblock {Conspectus of concepts of elasticity}.
\newblock {\em Mathematics and Mechanics of Solids}, {16}({5, SI}):{536--562},
  {2011}.

\bibitem{KRR-MMS2011a}
K.~R. Rajagopal.
\newblock {Non-linear elastic bodies exhibiting limiting small strain}.
\newblock {\em Mathematics and Mechanics of Solids}, {16}({1}):{122--139},
  {2011}.

\bibitem{KRR-ARS-PRSA2007}
K.~R. Rajagopal and A.~R. Srinivasa.
\newblock {On the response of non-dissipative solids}.
\newblock {\em Proceedings of the Royal Society of London, Mathematical,
  Physical and Engineering Sciences}, {463}({2078}):{357--367}, {2007}.

\bibitem{Rajagopal493}
K.R Rajagopal and A.R Srinivasa.
\newblock On a class of non-dissipative materials that are not hyperelastic.
\newblock {\em Proceedings of the Royal Society of London A: Mathematical,
  Physical and Engineering Sciences}, 465(2102):493--500, 2009.

\bibitem{iib}
Eberhard Zeidler.
\newblock {\em Nonlinear functional analysis and its applications. {II}/{B}}.
\newblock Springer-Verlag, New York, 1990.
\newblock Nonlinear monotone operators (translated from the German by the
  author and Leo F. Boron).

\end{thebibliography}
\end{document}